\documentclass[11pt,letterpaper]{amsart}
\usepackage[utf8]{inputenc}
\usepackage{amsmath,amssymb,graphicx,setspace,verbatim,url}
\usepackage{color}
\usepackage{hyperref}
\usepackage{multicol}
\usepackage{enumitem}
\usepackage{float}
\usepackage{soul}
\usepackage{tikz}
\usepackage[normalem]{ulem}

\usepackage{xcolor}
\usetikzlibrary{matrix}
\usepackage{xargs}

\usepackage{tikz-cd}
\usetikzlibrary{arrows.meta}
\newenvironment{red}{\relax\color{red}}{\hspace*{.5ex}\relax}
\newcommand{\bcp}{\begin{red} }
\newcommand{\ecp}{\end{red}}

\usepackage{comment}

\tikzcdset{
  arrow style={tikz,diagrams={>=Triangle}}
}

\input xy
\xyoption{all}

\newtheorem{thmalpha}{Theorem}

\newtheorem{prop}{Proposition}[section]
\newtheorem{coro}[prop]{Corollary}
\newtheorem{thm}[prop]{Theorem}
\newtheorem{lemma}[prop]{Lemma}

\newtheorem{definition}[prop]{Definition}

\newcommand{\GG}{\mathcal{G}}

\newcommand{\DD}{\mathcal{D}}

\newcommand{\CC}{\mathcal{C}}

\newcommand{\Aut}{\mathrm{Aut}}

\newcommand{\D}{\mathcal{D}} 
\newcommand{\Cyc}{\mathrm{C}}

\newcommand{\C}{\mathcal{C}} 
\newcommand{\ZZ}{\mathbb{Z}}

\newcommand{\Br}{\operatorname{Br}}

\usepackage[loadshadowlibrary,shadow, colorinlistoftodos,textsize=tiny,obeyFinal]{todonotes}

\setuptodonotes{fancyline, backgroundcolor=gray!10,bordercolor=gray}

\newcommandx{\inline}[2][1=]{\todo[inline, #1]{#2}}
\newcounter{hw}
\newcommandx{\homework}[2][1=]{\todo[inline, caption={Homework \thehw} #1]{\stepcounter{hw} #2}}

\newcommandx{\claudioLong}[2][1=Long todo, usedefault]{\todo[inline, color=red!25,caption={\textbf{Claudio:} #1}]{\textbf{Claudio: #1}. #2}}
\newcommandx{\philippeLong}[2][1=Long todo, usedefault]{\todo[inline,color=purple!25, caption={\textbf{Philippe:} #1}]{\textbf{Philippe: #1}. #2} }

\begin{document}
\title{Two-orbit right-angled Polytopes and Hypertopes}

\author[Claudio Alexandre Piedade]{Claudio Alexandre Piedade}
\address[Claudio Alexandre Piedade]{Université Libre de Bruxelles, Brussels, Belgium
}
\email{claudio.piedade@ulb.be}

\author[Philippe Tranchida]{Philippe Tranchida}
\address[Philippe Tranchida]{Université Libre de Bruxelles, Brussels, Belgium
}
\email{tranchida.philippe@gmail.com}
\subjclass{51E24, 20F67, 20F55}{}
\keywords{Two-orbit Polytopes, Hypertopes, Chamber Systems, Right-Angled Coxeter groups, Acylindrically hyperbolic groups}

\setcounter{tocdepth}{1}

\thanks{\textit{Declaration of interests:} none}
\date{\today}

\begin{abstract}
    Abstract polytopes and hypertopes are thin and residually-connected incidence geometries or, equivalently, chamber systems. Their degree of regularity is understood by looking at the action of their automorphism group on the set of their chambers. When this action has only one orbit, they are called regular. In this article, we are interested in polytopes and hypertopes with exactly two orbits on their chambers. Let $I = \{1,2,\ldots,n\}$. For any $J\subsetneq I$ with $|I| \geq 3$, we show that there exists a class $2_J$ two-orbit hypertope whose diagram is any graph on $|I|$ vertices, with edges all labeled by $\infty$. In order to do so, we first establish a general framework for constructing hypertopes as quotient of chamber systems, and then exploit the machinery of acylindrically hyperbolic groups for showing that this framework can be used to create two-orbit polytopes and hypertopes of class $2_J$. Note that all polytopes and hypertopes constructed in this paper are infinite.
 
\end{abstract}

\maketitle
\section{Introduction}

Abstract polytopes form an area of mathematics where both algebra and geometry meet. The original definition of an abstract polytope can be found in~\cite{ARP} in terms of ordered posets whose elements are called faces. 
An equivalent way of defining an abstract polytope is by considering it as an incidence geometry which is thin, residually-connected and whose Buekenhout diagram is linear~\cite{buekenhout2013diagram}.
Recently, abstract polytopes have also been studied using the concept of chamber systems, mostly under the name of maniplexes, see for example \cite{ManiplexesWilson,PolytopalityManiplexes, SymmetryTypeGraph}. We will adopt the chamber system point of view in this paper.

A chamber system $\C$ consists of a set $C$ of elements, called chambers, together with a set of equivalence relations $(\sim_i)_{i\in I}$, where $I$ is a finite index set~\cite{buekenhout2013diagram}. Two chambers $c,d\in \C$ are said to be $i$-adjacent whenever $c\sim_i d$. Chamber systems can be seen as an edge-coloured graph, where the vertices correspond to chambers and two vertices $c$ and $d$ are joined by an edge of colour $i$ whenever $c \sim_i d$.
It is usual to study chamber systems which are highly symmetric. The most frequently studied class of highly symmetric objects are those which are regular or chiral. A chamber systems is said to be \emph{regular} when its automorphism group $\Aut(\C)$ is transitive on the set of chambers, and it is \emph{chiral} when there are exactly two orbits of the action of $G$ on $C$ such that any $i$-adjacent chambers are in different orbits, for every $i\in I$. 

In this paper, we focus on a less common class of symmetric structures, which generalizes the notion of chirality. For any $J\subset I$, we say that $\C$ is a \emph{two-orbit} chamber system of type $2_J$ if the action of $G$ on the chambers has two orbits and $i$-adjacent chambers are in distinct orbits if and only if $i\in I\setminus J$. Thinking of $\C$ as a graph, a two-orbit chamber system of type $2_J$ admits a bicoloring of its chambers such that $i$-adjacent chambers are of the same colour if and only if $i \in J$. A chiral polytope is then a two-orbit polytope of class $2_\emptyset$. Two-orbit polytopes, or maniplexes, have been studied before, see for instance \cite{Pellicer_2025,ExistanceTwoOrbitsManiplexes,Polytopality2orbitsmaniplexes}

Chamber systems can also be built from groups, in which case they are called \emph{coset chamber systems}. This notion is due to Jacques Tits and more details can be found in~\cite{buekenhout2013diagram}. In this paper we will use right-angled Coxeter groups to build chamber systems.
Let $(W,S)$ be a right-angled Coxeter system, meaning that $W$ is a right-angled Coxeter group generated by a finite set of involutions $S = \{s_i\mid i\in I\}$. The Coxeter chamber system of $(W,S)$, denoted $\C(W,S)$, is the chamber system whose chambers correspond to the elements of $W$, and two chambers $g,h\in W$ are $i$-adjacent if and only if $g\langle s_i\rangle = h\langle s_i\rangle$. For any $J \subset I$, the chambers of a right-angled Coxeter chamber systems can always be bi-coloured in such a way that two $i$-adjacent chambers have the same colour if and only if $i \in J$. We define $C_J$ to be the subgroup of $W$ that preserves the bi-colouring induced by $J$. Note that $C_J$ is always an index two subgroup of $W$ (see Section \ref{sec:RegTwoOrbHyper} for more details). 

In \cite{Pellicer_2025}, the author lists a long series of open question regarding chiral polytopes and related topics. One of these questions asks whether, for any finite index set $I$, there exists a two-orbit polytope of type $2_J$ for any proper subset $J \subset I$. 
This problem can be equally posed for two-orbit hypertopes, a generalisation of polytopes where the linear diagram condition is dropped.
In this paper, we give a positive answer to this more general question, showing that, for any finite type set $I$ and any given unlabelled diagram $D$ over $I$, two-orbit hypertopes of type $2_J$ exist. 

To achieve this goal, we work with quotients of coset chamber systems. Indeed, any hypertope can be realised as a quotient of the coset chamber system of its covering right-angled Coxeter group. More precisely, every hypertope can be seen as the chamber system $\C(W,S)/H$, for some subgroup $H\leq W = \Aut(\C(W,S))$, where $(W,S)$ is a well-chosen right-angled Coxeter system. The automorphism group of such a quotient is then isomorphic to $N_W(H)/H$. Our aim is to find a particular subgroup $H\leq W$ such that: (1) $\C(W,S)/H$ is a hypertope; and (2) $N_W(H) = C_J$.
To achieve condition (1), we prove a general result about quotients of coset chamber systems.
\begin{thmalpha}[Theorem~\ref{thm:quotient_RC}]\label{thm:A}
    Let $\CC=\CC\bigl(G,B,(G^{(i)})_{i\in I}\bigr)$
be a residually-connected coset chamber system over the finite type set $I$.  Let $N\trianglelefteq G$ such that $N\cap G^{(J)}G^{(K)}G^{(L)} = \{1\}$ for $J,K,L\subseteq I$. 
Then, for $H\leq N$, $\CC/H$ is
residually-connected.
\end{thmalpha}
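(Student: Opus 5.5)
The plan is to translate residual connectedness of the quotient into a group-theoretic statement about double cosets, and then use the hypothesis on $N$ to lift connectivity from $\CC$ to $\CC/H$.

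\textbf{Setup.} Chambers of $\CC$ are the cosets $gB$, and $gB\sim_i hB$ iff $gG^{(i)}=hG^{(i)}$. I write $G^{(J)}=\langle G^{(j)}: j\in J\rangle$. The quotient $\CC/H$ has chambers the orbits $HgB$, and two orbits are $i$-adjacent iff they contain $i$-adjacent representatives. Residual connectedness of a chamber system means that every $J$-residue is connected, and that the intersection of residues behaves well: for residues $R_J(x)$ and $R_K(y)$ meeting a common $L$-residue, there should be a chamber in $R_J\cap R_K$ lying in a residue of type $J\cap K$ as expected. In coset language, for $\CC$ this is the standard condition
\[
G^{(J)}\cap G^{(K)}=G^{(J\cap K)},
\]
and more generally
\[
G^{(J)}G^{(L)}\cap G^{(K)}=G^{(J\cap K)}\bigl(G^{(L)}\cap G^{(K)}\bigr)
\]
type identities, coming from connectivity of the residues; I will take whichever formulation the paper fixes.

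\textbf{Connectivity of residues.} First I show that the $J$-residues of $\CC/H$ are connected. Residues of $\CC/H$ are images of residues of $\CC$, and the image of a connected residue is connected, so this step is routine.

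\textbf{Intersections of residues.} The main step concerns intersections. Suppose $Hx$ lies in both the $J$-residue of $Hg$ and the $K$-residue of $Hg'$ in $\CC/H$. Lifting gives
\[
x\in h_1 g G^{(J)}\quad\text{and}\quad x\in h_2 g' G^{(K)}.
\]
The danger is that the two lifts use different elements of $H$, so the residues of $\CC$ containing them need not intersect. I want to show that we can choose lifts with $h_1=h_2$, possibly after adjusting by a further residue.

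Suppose two lifts of the same chamber, reached through residues of types $J,K,L$, differ by $n\in H\le N$. Then $n$ can be written as a product lying in $g\,G^{(J)}G^{(K)}G^{(L)}g^{-1}$. Since $N$ is normal, conjugating gives an element of $N\cap G^{(J)}G^{(K)}G^{(L)}=\{1\}$. Hence $n=1$, and the lifts coincide. This shows that each residue of $\CC/H$ is isomorphic to a residue of $\CC$: the triple product controls the paths $J$-then-$K$-then-back-through-$L$ that could create spurious identifications. Therefore the residue-intersection property of $\CC$ transfers to $\CC/H$.

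\textbf{Expected obstacle.} The main difficulty is setting up exactly which configuration of three residues produces the element $n$. I need a discrepancy between lifts to always be expressible in a product of only three parabolic subgroups, after conjugating by the base point. I expect to handle this by fixing a base chamber $B$, using the transitivity of $G$ to move one of the residues to contain $B$, and then reading off $n$ as $a\,b\,c$ with $a\in G^{(J)}$, $b\in G^{(K)}$ and $c\in G^{(L)}$.
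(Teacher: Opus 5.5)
You have the right mechanism: conjugate the discrepancy between two lifts by the base point, land in $N\cap G^{(\cdot)}G^{(\cdot)}G^{(\cdot)}=\{1\}$, and conclude that both lifts use the same element of $H$. However, the argument as written does not prove the theorem. You never fix a criterion for residual connectedness in which only three-factor products arise, and the sentence meant to bridge this (``each residue of $\CC/H$ is isomorphic to a residue of $\CC$ \dots\ therefore the residue-intersection property transfers'') is a non sequitur.

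Isomorphism of proper cells only uses $N\cap G^{(J)}=\{1\}$ (Proposition~\ref{prop:correspondenceResidues}), and it does not give residual connectedness. Take $W=\langle r_1,r_2,r_3\mid r_i^2\rangle$ and $N=\langle\langle r_1r_3r_2r_3\rangle\rangle$. Then $W/N\cong D_\infty$ with $r_2\mapsto r_3r_1r_3$, and $N\cap W_J=\{1\}$ for every $J\subsetneq I$. Yet the $\{1,3\}$-cell and the $\{2,3\}$-cell through a chamber of $\CC/N$ are both the whole chamber set, so their intersection is not a $\{3\}$-panel; here $r_1r_3r_2r_3\in N\cap W_{\{1,3\}}W_{\{2,3\}}$. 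Your configuration with residues through unrelated chambers $Hg$ and $Hg'$ does not help either. The resulting discrepancy lies in no product of parabolic subgroups unless everything is anchored at one common chamber.

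The missing ingredient is Lemma~\ref{lem:rc-characterisation}. Since $\CC/H$ is connected (images of galleries are galleries), it suffices to show $cJ^*L^*\cap cK^*L^*\subseteq c(J\cap K)^*L^*$ for $c=HwB$. If one of $J,K,L$ equals $I$ this is trivial. Such cases must be excluded from the hypothesis anyway, since $G^{(I)}=G$ would force $N=\{1\}$; this is why Theorem~\ref{thm:quotient_RC} uses proper subsets.

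For proper $J,K,L$, Lemma~\ref{lem:quotient-cell-description} gives $z=HwxuB=HwyvB$ with $x\in G^{(J)}$, $y\in G^{(K)}$, $u,v\in G^{(L)}$. From $hwxub=h'wyvb'$ you get
\[
w^{-1}h'^{-1}hw=y\,(vb'b^{-1}u^{-1})\,x^{-1}\in N\cap G^{(K)}G^{(L)}G^{(J)}=\{1\}.
\]
The two $L$-factors merge, which is exactly why three factors suffice. Hence $h=h'$ and $xub=yvb'\in G^{(J)}G^{(L)}\cap G^{(K)}G^{(L)}$. Residual connectedness of $\CC$, in the form of condition (3) of Proposition~\ref{prop:RC_CosetChamberSystem}, puts this element in $G^{(J\cap K)}G^{(L)}$, so $z\in c(J\cap K)^*L^*$. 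This final step is where the hypothesis on $\CC$ is actually used, and it is also absent from your proposal.
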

Additionally, we prove that a quotient by a subgroup as in the above theorem also preserves residues, and hence the diagram of $\C$.

The second condition guarantees that $\C(W,S)/H$ is of class $2_J$.
Thanks to Theorem \ref{thm:A}, we will see that $\C(W,S)/H$ is automatically a hypertope whenever $H$ is chosen to be a subgroup of the Brunnian subgroup $\Br(W,S)$, where The Brunnian subgroup can be loosely be described as the set of elements of $W$ that become trivial after deleting any of the generators.
We chose the name Brunnian by analogy with Brunnian links and braids. These links and braid have the defining property that deleting any one of their strands renders them trivial. In order to find a subgroup $H \leq \Br$ satisfying $(2)$, we use the powerful machinery of acylindrically hyperbolic groups.

The core result of this paper is the following, whose proof is recorded here in order to give an overview. The rest of the paper is dedicated to established the core results needed in this proof. 

\begin{thmalpha}\label{thm:main}
    For every finite type set $I$, any proper subset $J \subsetneq I$ and any connected graph $\mathcal{G}$ with at least three vertices, there exists a class $2_J$ hypertope whose diagram $\mathcal{D}$ is $\mathcal{G}$ where every edge is labelled by $\infty$. In particular, there exist class $2_J$ two-orbit polytopes of any rank.
\end{thmalpha}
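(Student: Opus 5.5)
We follow the outline in the introduction. Fix $I$, $J\subsetneq I$ and a connected graph $\GG$ on the vertex set $I$ with $|I|\geq 3$. Let $(W,S)$ be the right-angled Coxeter system with $S=\{s_i\mid i\in I\}$, where $s_i s_j$ has order $\infty$ if $\{i,j\}$ is an edge of $\GG$ and order $2$ otherwise.

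The Coxeter chamber system $\C(W,S)$ is thin and residually-connected, and its diagram is $\GG$ with every edge labelled $\infty$. We define the Brunnian subgroup $\Br(W,S)$ as the intersection of the kernels of the retractions $\rho_i\colon W\to W_{I\setminus\{i\}}$ that kill $s_i$. It is normal in $W$.

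\textbf{Step 1: quotients by Brunnian subgroups are hypertopes.} We check the hypothesis of Theorem~\ref{thm:A}: $\Br(W,S)\cap W_JW_KW_L=\{1\}$ for all $J,K,L\subseteq I$. Since $|I|\geq 3$, there should be a generator $s_i$ that does not occur in some reduced-word decomposition of an element $w=abc$ with $a\in W_J$, $b\in W_K$, $c\in W_L$. Showing this uses the normal form theory for right-angled Coxeter groups, and is where the hypothesis $|I|\geq 3$ enters. Then $\rho_i(w)=w$, so $w\in\Br$ forces $w=1$.

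Theorem~\ref{thm:A} then gives that $\C(W,S)/H$ is residually-connected for every $H\leq\Br$. Thinness and the diagram are inherited, because a quotient by such an $H$ preserves residues. Hence $\C(W,S)/H$ is a hypertope with diagram $\GG$ labelled by $\infty$, and its automorphism group is $N_W(H)/H$.

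\textbf{Step 2: choosing $H$ with $N_W(H)=C_J$.} We need $H\leq\Br$ whose normalizer is exactly the index-two subgroup $C_J$. It suffices that $H$ is normalized by $C_J$ but not by $W$. The first condition gives $C_J\leq N_W(H)$. The second rules out $N_W(H)=W$, and since $C_J$ has index $2$ there is nothing in between. The orbit count is then exactly two, with $i$-adjacent chambers lying in different orbits precisely when $s_i\notin C_J$, that is, when $i\notin J$. This is exactly class $2_J$.

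The natural choice is $H=\langle\langle g\rangle\rangle_{C_J}$, the normal closure in $C_J$ of a suitable $g\in\Br$. Since $\Br\trianglelefteq W$, we have $H\leq\Br$, and $H$ is normalized by $C_J$. Pick $t\in W\setminus C_J$. We must choose $g$ so that $tgt^{-1}\notin H$.

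\textbf{Main obstacle.} We expect the hard step to be proving $tgt^{-1}\notin H$, and this is where acylindrical hyperbolicity is used. When $\GG$ is connected with at least three vertices and is not a join, $W$ (and hence its finite-index subgroup $C_J$) is acylindrically hyperbolic. It acts on the contact graph or a suitable hyperbolic space, with $\Br$ an infinite normal subgroup containing loxodromic elements.

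Our approach is small cancellation over acylindrically hyperbolic groups, in the style of Dahmani--Guirardel--Osin. We take $g$ to be a sufficiently high power of a generic WPD loxodromic element of $\Br$ that is chosen not to be commensurable with its $t$-conjugate. The normal closure of such a $g$ in $C_J$ is then free and contains no loxodromic element commensurable with $tgt^{-1}$, so $tgt^{-1}\notin H$.

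The case where $\GG$ is a join (for example a complete graph) needs separate treatment. There we would first pass to a non-join setting, or argue with a product decomposition, and choose $g$ in a factor that is acylindrically hyperbolic.

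\textbf{Polytopes.} For the final claim, take $\GG$ to be a path on $n\geq 3$ vertices. The resulting hypertope has a linear diagram, so it is an abstract polytope of class $2_J$ for every rank $n\geq 3$.
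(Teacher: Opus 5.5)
Your overall plan (apply the quotient theorem to $H\leq\Br$, then do a Dehn filling in $C_J$) is the paper's, but Step~1 rests on a false claim. An element $w=abc$ with $a\in W_J$, $b\in W_K$, $c\in W_L$ and $J,K,L\subsetneq I$ need not omit any generator. For $I=\{1,2,3\}$, $J=\{1\}$, $K=\{2\}$, $L=\{3\}$, the word $s_1s_2s_3$ is reduced with full support, and the support of a reduced word is an invariant of the element. In fact every nontrivial element of $\Br$ has full support, so ``$w$ misses a generator'' is equivalent to the conclusion you want, and normal forms alone cannot deliver it; you have to use $w\in\Br$ first. The paper does this with the retractions $\pi_J$. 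Applying $\pi_J,\pi_K,\pi_L$ (each trivial on $\Br$, since the subsets are proper) to $w=abc$ gives $a\in W_{J\cap L}W_{J\cap K}$, $b\in W_{J\cap K}W_{K\cap L}$ and $c\in W_{K\cap L}W_{J\cap L}$. Regrouping puts $w$ in $W_JW_L$. Writing $w=jl$, the identity $\pi_J(w)=1$ forces $j\in W_{J\cap L}$, so $w\in W_L$ and $w=\pi_L(w)=1$. The condition also holds only for proper subsets (for $J=I$ the intersection is $\Br$ itself). Moreover, $|I|\geq3$ plays no role here; it is needed only for the diagram and for acylindrical hyperbolicity.

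In Step~2 the strategy is sound, but its key input is assumed rather than proved. Since $a$ and $tat^{-1}$ are conjugate in $W$, ``non-commensurable'' can only mean non-commensurable in $C_J$. This is equivalent to $E_W(a)\leq C_J$: if $k^{-1}a^mk=ta^nt^{-1}$ with $k\in C_J$, then $kt\in E_W(a)\setminus C_J$. Nothing guarantees this for a ``generic'' loxodromic. The paper first proves $\Br\neq1$ (a nested commutator along a walk through all vertices of $\GG$). It deduces that $\Br$ is infinite, because $W$ has no nontrivial finite normal subgroups, hence $s$-normal, non-elementary and suitable. Hull's Corollary~5.7 then gives $a\in\Br$ with $E_W(a)=\langle a\rangle\leq\Br\leq C_J$.

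Your treatment of joins confuses conventions. $\GG$ records non-commutation, so connectivity of $\GG$ is exactly irreducibility of $W$. Minasyan--Osin then gives acylindrical hyperbolicity for every connected $\GG$ on at least three vertices. For example, complete $\GG$ gives $\ast_{i\in I}\Cyc_2$, and the path on three vertices, which is a join, gives $\Cyc_2*(\Cyc_2\times\Cyc_2)$. Direct products come from joins of the commutation graph $\GG^c$, which connectivity of $\GG$ excludes. So no separate case is needed, and taking $g$ in a factor would be impossible anyway, since a nontrivial $g\in\Br$ has full support.

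Finally, freeness of $H$ is beside the point. Once $a$ is available, apply the Dehn filling theorem to $\{E_{C_J}(a),E_{C_J}(b)\}$ with $b=tat^{-1}$, $N_a=\langle a^n\rangle$ for $n$ large, and $N_b=\{1\}$. Injectivity of $E_{C_J}(b)\to C_J/H$ then gives $tgt^{-1}=b^n\notin H$. This single-filling variant works as well as the paper's filling of both $a^{2k}$ and $b^{3k}$ followed by a comparison of orders.
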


\begin{proof}
    Let $\mathcal{D}$ be the diagram obtained from the graph $\mathcal{G}$ by labeling all edges with $\infty$. Build the Coxeter group $W = W(\mathcal{D})$ and let $\C := \mathcal{C}(W,S)$ be its chamber system. Consider $\Br = \Br(W,S)$, the Brunnian subgroup of $W$. By Theorem \ref{thm:Brunnian-hypertope}, for any subgroup $H \subset \Br$, the quotient chamber system $\C/H$ is a hypertope. By Proposition \ref{prop:AutomorphismGroupQuotient}, the automorphism group of $\C/H$ is isomorphic to $N_W(H)/H$, where $N_W(H)$ denotes the normalizer in $W$ of $H$. Finally, by Theorem \ref{thm:prescribed-normalizer}, there exists such a subgroup $H \leq \Br$ such that $N_W(H) = C_J$. Therefore, $\mathcal{C}/H$ is the desired two-orbit hypertope of class $2_J$.
\end{proof}

Note that all of the polytopes and hypertopes constructed in this paper are infinite. They are what we call right-angled polytopes and hypertopes, because the only Coxeter relations that they satisfy are commutation of some generators. The harder question of finding finite two-orbit polytopes of type $2_J$ for any $J \subset I$ is thus still open.

\subsection{Structure of the Paper}

In section \ref{sec:background}, we gather all necessary background information on chamber systems, Coxeter groups, hypertopes and acylindrically hyperbolic groups and actions. Section \ref{sec:quotientChamberSystems} then establishes a general setting in which quotient of chamber systems behave nicely. In particular, we show that if Equation \eqref{eq:trivial_intersection_N} holds, the quotient chamber system inherits properties such as connectedness, residual-connectedness and thinness. Moreover, the residues of the starting coset chamber system and of the quotient are all isomorphic. In Section \ref{sec:brunnian}, we define the Brunnian subgroup of a right-angled Coxeter group and show that it satisfies all the conditions of the construction in Section \ref{sec:quotientChamberSystems}. Finally, Section \ref{sec:AppropriateSub} exploits the fact that a right-angled Coxeter group is acylindrically hyperbolic to show that we can find a subgroup $H$ of the Brunnian group $\Br \subset W$ such that $N_W(H) = C_J$, the required condition for constructing two-orbit hypertopes of class $2_J$.

\section{Background}\label{sec:background}

\subsection{Chamber systems}\label{sec:ChamberSystems}

Throughout this section, $I$ denotes a finite non-empty set of types.  We
follow the chamber-system terminology of
\cite[Chapter~3]{buekenhout2013diagram}.

A \emph{chamber system over $I$} is a pair $\CC=(C,(\sim_i)_{i\in I})$,
where $C$ is a non-empty set, whose elements are called \emph{chambers}, and
each $\sim_i$ is an equivalence relation on $C$.  Its \emph{rank} is $|I|$.
The equivalence classes of $\sim_i$ are called the \emph{$i$-panels}, and
chambers in the same $i$-panel are \emph{$i$-adjacent}.  The chamber system
is \emph{firm}, \emph{thick}, or \emph{thin} if every panel has, respectively,
at least two, at least three, or exactly two chambers.

From a chamber system $\C$, one can define its \emph{chamber graph} $\GG(\CC)$: it has vertex set $C$ and two distinct chambers
$c,d$ are joined by an edge labelled $i$ whenever $c\sim_i d$.  A chamber
system $\C$ is said to be \emph{connected} if this chamber graph is connected.

Let $J\subseteq I$.  A \emph{$J$-gallery} is a finite sequence of chambers
in which consecutive chambers are $j$-adjacent for some $j\in J$.  The set
of chambers joined to $c$ by a $J$-gallery is denoted by $cJ^*$ and is called
the \emph{$J$-cell} containing $c$.  With the adjacency relations indexed by
$J$, it is itself a chamber system of rank $|J|$.  We allow galleries of
length zero, so $c\varnothing^*=\{c\}$.  For $X\subseteq C$, put
\[
    XJ^*:=\bigcup_{x\in X}xJ^*.
\]
Thus $cJ^*K^*$ means $(cJ^*)K^*$.

A \emph{homomorphism} of chamber systems over $I$ is a map preserving each
$i$-adjacency.  An \emph{isomorphism} is a bijection for which both the map
and its inverse are homomorphisms.  An isomorphism from $\CC$ to itself is
an \emph{automorphism}. We write $\Aut(\CC)$ for the group of these
type-preserving automorphisms.  A group of automorphisms is
\emph{chamber-transitive} if it acts transitively on $C$.  In a connected
thin chamber system, an automorphism fixing one chamber fixes its unique
other chamber in each panel, and then every chamber by following
galleries.  Thus a chamber-transitive action on such a system is simply
transitive.

For $i\in I$, the $(I\setminus\{i\})$-cells are called \emph{$i$-objects}.
The following definition controls how these objects meet.

\begin{definition}[{\cite[Section~3.4]{buekenhout2013diagram}}]
A chamber system $\CC$ is \emph{residually-connected} if, for every
$J\subseteq I$ and every family $(Z_j)_{j\in J}$ of pairwise intersecting
$j$-objects, the intersection $\bigcap_{j\in J}Z_j$ is an
$(I\setminus J)$-cell.
\end{definition}
Note that, for $J=\varnothing$, the intersection in the definition above is understood
to be $C$, hence residual-connectedness implies connectedness.

A related, and more common concept, is the one of incidence systems \cite[Chapter 1]{buekenhout2013diagram}. Whenever a chamber system
$\CC$ is a residually-connected chamber system, there is a unique residually-connected
incidence geometry with the same chambers, residues and automorphism group
~\cite{buekenhout2013diagram}. Hence, residually connected chamber systems and residually connected incidence systems are basically the same thing, from two different points of view.

We will use the following standard characterization of residual-connectedness.

\begin{lemma}[{\cite[Lemma~3.4.9]{buekenhout2013diagram}}]
\label{lem:rc-characterisation}
Let $\CC$ be a non-empty connected chamber system over the finite set $I$.
Then $\CC$ is residually-connected if and only if, for every chamber $c$ and
all $J,K,L\subseteq I$,
\begin{equation}
\label{eq:rc-cell-identity}
    cJ^*L^*\cap cK^*L^*=c(J\cap K)^*L^*.
\end{equation}
\end{lemma}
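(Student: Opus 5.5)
We prove the two implications separately. The inclusion $c(J\cap K)^*L^*\subseteq cJ^*L^*\cap cK^*L^*$ always holds, so only the reverse inclusion needs an argument. The main tool in both directions is the description of a cell as an intersection of objects: for any $X\subseteq I$, the cell $cX^*$ is contained in each $i$-object $c(I\setminus\{i\})^*$ with $i\notin X$. Residual-connectedness, applied to the family of objects through $c$ of types $I\setminus X$, shows that their intersection is an $X$-cell containing $c$, hence equal to $cX^*$.

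\emph{Residual-connectedness implies \eqref{eq:rc-cell-identity}.} Let $d\in cJ^*L^*\cap cK^*L^*$, and choose $e\in cJ^*$ and $f\in cK^*$ with $d\in eL^*\cap fL^*$. Consider the family of objects consisting of:
\begin{itemize}
\item $c(I\setminus\{j\})^*$ for $j\in I\setminus(J\cap K)$, whose intersection is $c(J\cap K)^*$;
\item $d(I\setminus\{i\})^*$ for $i\in I\setminus L$, whose intersection is $dL^*$.
\end{itemize}
First, this family is pairwise intersecting. Objects through $c$ meet at $c$, and objects through $d$ meet at $d$. For a mixed pair with $j\notin J$, the chamber $e$ lies in both: $e\in cJ^*\subseteq c(I\setminus\{j\})^*$, and $e\in dL^*\subseteq d(I\setminus\{i\})^*$. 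If instead $j\notin K$, the chamber $f$ plays the same role.

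Second, the family is well defined by type. If some type $i$ occurs in both lists, say with $i\notin J$, then $e$ shows that the $i$-object of $c$ and the $i$-object of $d$ coincide. So the family may be indexed by the types in $(I\setminus(J\cap K))\cup(I\setminus L)$.

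Residual-connectedness then gives a chamber $g$ in the whole intersection. Such a $g$ lies in $c(J\cap K)^*\cap dL^*$, and therefore $d\in gL^*\subseteq c(J\cap K)^*L^*$.

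\emph{\eqref{eq:rc-cell-identity} implies residual-connectedness.} Setting $L=\varnothing$ gives $cJ^*\cap cK^*=c(J\cap K)^*$. Let $(Z_j)_{j\in J}$ be pairwise intersecting objects. We argue by induction on $|J|$. The case $J=\varnothing$ is connectedness, and the case $|J|=1$ is trivial.

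Suppose $Z_{j_1}\cap\dots\cap Z_{j_m}=cA^*$ with $A=I\setminus\{j_1,\dots,j_m\}$, and let $Z=dB^*$ be a further object, where $B=I\setminus\{j\}$. Since $Z$ meets each $Z_{j_k}=c(I\setminus\{j_k\})^*$, we have $d\in c(I\setminus\{j_k\})^*B^*$ for every $k$. Iterating \eqref{eq:rc-cell-identity} with $L=B$ gives
\[
\bigcap_k c(I\setminus\{j_k\})^*B^*=cA^*B^*,
\]
so $d\in cA^*B^*$. Hence there is a chamber $e\in cA^*$ with $d\in eB^*$, that is, $e\in cA^*\cap Z$. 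Using the case $L=\varnothing$, we then get $cA^*\cap Z=eA^*\cap eB^*=e(A\cap B)^*$, which is the required cell.

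The main obstacle is the first direction. There one must build a pairwise intersecting family of objects from a single membership $d\in cJ^*L^*\cap cK^*L^*$, using the witness chambers $e$ and $f$, and check that types occurring for both $c$ and $d$ give the same object.
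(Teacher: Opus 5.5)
Your proof is correct. The paper gives no argument of its own for this lemma: it is quoted from \cite[Lemma~3.4.9]{buekenhout2013diagram} and used only as a black box in the proof of Theorem~\ref{thm:quotient_RC}. So there is no internal proof to compare with, and what you supply is a self-contained derivation from the definition of residual connectedness given in Section~\ref{sec:ChamberSystems}.

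In the forward direction you check exactly the two points that need care. First, every mixed pair $c(I\setminus\{j\})^*$, $d(I\setminus\{i\})^*$ meets: in $e$ if $j\notin J$, and in $f$ if $j\notin K$. Second, at a type $t\notin (J\cap K)\cup L$ the objects through $c$ and through $d$ coincide. Hence the family is genuinely indexed by $I\setminus(J\cap K\cap L)$. From the conclusion of residual connectedness for this family you then only need non-emptiness of the intersection.

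In the converse you invoke the identity only for $L=\varnothing$ and for $L$ of the form $I\setminus\{j\}$. So your argument actually shows slightly more than stated: for a connected chamber system, these special cases already imply residual connectedness, and hence, by your first half, the identity for all $L$.

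The one step you leave implicit is $Z_{j_k}=c(I\setminus\{j_k\})^*$. It is immediate from $c\in cA^*\subseteq Z_{j_k}$.
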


A \emph{diagram} over $I$ is a labeled graph with vertex set $I$ that specifies, for each pair of distinct types
$i,j$, the allowed isomorphism types of the corresponding rank two cells.  We say that $\CC$
\emph{belongs to} the diagram if every $\{i,j\}$-cell is of an allowed
type, where the allowed types are recorded by the labels on the edge $\{i,j\}$. For a thin residually-connected chamber system, each rank two cell
must be an alternating cycle of length $2m$, with $m\geq2$, or a doubly infinite
path as its chamber graph.  The latter is denoted by $m=\infty$.  When $m$
is the same for every $\{i,j\}$-cell, we denote this common number as $m_{ij}$. An edge labeled $m_{i,j}$ on the diagram of a thin and residually-connected chamber system therefore means that all $\{i,j\}$-cells are alternating paths of length $2m_{i,j}$. We also omit edges labelled $2$.  A diagram is \emph{linear} if the types can be ordered so that
$m_{ij}=2$ whenever $|i-j|>1$.  These are the usual diagram conventions for a Buekenhout diagram for a chamber system,
expressed in terms of cells rather than residues of an incidence geometry;
see \cite[Sections~3.5 and~4.1]{buekenhout2013diagram}.

Let $\CC=(C,(\sim_i)_{i\in I})$ be a chamber system, and let
$H\leq\Aut(\CC)$.  The \emph{quotient chamber system} $\CC/H$ has chamber set
$C/H$, the set of $H$-orbits on $C$, and adjacency relations
\begin{equation*}
    Hc\sim_i Hd
    \quad\Longleftrightarrow\quad
    hc\sim_i d\text{ for some }h\in H.
\end{equation*}
Since the relations $\sim_i$ are $H$-invariant equivalence relations, the
relations defined above are still
equivalence relations (see \cite[Proposition~3.1.10]{buekenhout2013diagram}).

The automorphism group of the quotient chamber system is easily controlled whenever it is thin.

\begin{prop}\label{prop:AutomorphismGroupQuotient}
    Suppose that $\C$ is a thin chamber system with automorphism group $G$ and let $H$ be a subgroup of $G$ such that $\C/H$ is a thin chamber system. Then, the automorphism group of the quotient chamber system $\C/H$ is isomorphic to $N_G(H)/H$.
\end{prop}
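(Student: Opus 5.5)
We define a homomorphism $\Phi\colon N_G(H)\to\Aut(\C/H)$, show that its kernel is $H$, and show that it is surjective.

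\emph{The map $\Phi$.} For $g\in N_G(H)$ set $\Phi(g)(Hc):=Hgc$. This is well defined because $gHc=Hgc$ when $g$ normalises $H$. It preserves adjacency: if $hc\sim_i d$ with $h\in H$, then $ghg^{-1}(gc)\sim_i gd$, and $ghg^{-1}\in H$. Applying the same argument to $g^{-1}$ shows that $\Phi(g)$ is an automorphism, and $\Phi$ is clearly a homomorphism. The inclusion $H\leq\ker\Phi$ is immediate.

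\emph{The kernel.} Let $g\in\ker\Phi$ and fix a chamber $c_0$. Then $gc_0=hc_0$ for some $h\in H$, so $h^{-1}g$ fixes $c_0$. Since $\C$ is thin, I want to use the earlier observation that an automorphism fixing a chamber of a connected thin system is trivial. Connectedness of $\C$ is needed here. If it is not part of the hypothesis, I would work on the connected component; in the intended application $\C$ is a Coxeter chamber system, which is connected and on which $G=W$ acts simply transitively. Hence $g=h\in H$ and $\ker\Phi=H$.

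\emph{Surjectivity.} This is the main step. Let $\alpha\in\Aut(\C/H)$ and fix $c_0$. Choose $d_0$ with $Hd_0=\alpha(Hc_0)$. Next we show that the orbit map $\pi\colon\C\to\C/H$ is a covering of chamber graphs. Both systems are thin, and the $i$-panel of $c$ is $\{c,c'\}$, which maps onto the $i$-panel $\{Hc,Hc'\}$ of $Hc$. We need $Hc\neq Hc'$, otherwise the quotient panel would be a single chamber, contradicting thinness of $\C/H$. So $\pi$ is a bijection on each panel and lifts galleries uniquely.

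Define $\tilde\alpha(c)$ by lifting. Take a gallery from $c_0$ to $c$ with type word $w$. Project it to $\C/H$, apply $\alpha$, and lift the image gallery starting at $d_0$; the endpoint is $\tilde\alpha(c)$. Well-definedness is the crux: two galleries from $c_0$ to $c$ must give the same endpoint. I would use that in a thin chamber system the endpoint of a gallery is determined by its type word, $c_0\cdot w$. Two words reach the same chamber from $c_0$ exactly when they reach the same chamber from any base chamber, because $G$ acts transitively and type-preservingly. In the Coxeter setting this means they are equal in $W$. Lifts then agree because the lifted gallery from $d_0$ has the same type word.

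Once $\tilde\alpha$ is defined, it commutes with adjacency by construction. Transitivity gives $g\in G$ with $gc_0=d_0$, and $\tilde\alpha(c_0\cdot w)=d_0\cdot w=g(c_0\cdot w)$, so $\tilde\alpha=g$. It remains to check $g\in N_G(H)$. Since $g$ maps $H$-orbits to $H$-orbits, $gHc=Hgc$ for all $c$, so $g^{-1}hg$ maps $c$ into $Hc$. Simple transitivity, which follows from the kernel argument, then gives $g^{-1}Hg=H$. Finally $\Phi(g)=\alpha$, which yields $\Aut(\C/H)\cong N_G(H)/H$.

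I expect the main obstacle to be the implicit hypotheses: connectedness and the chamber-transitivity of $G$ on $\C$, which is needed for the reduction to words. I would state these explicitly, as they hold for $\C(W,S)$.
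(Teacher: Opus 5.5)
Your argument is correct under the two hypotheses you make explicit, connectedness of $\C$ and chamber-transitivity of $G$. It is the standard lifting argument behind \cite[Proposition~4.2.10]{buekenhout2013diagram}. The paper only cites that result for Coxeter chamber systems and asserts that the same argument works in the stated generality.

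What you add is an identification of what that extension needs, and this is not a technicality: as written, the proposition is false. Let $\C$ be the chamber system of a chiral polytope, with $G=\Aut(\C)$ and $H=G$. Then $\C/H$ consists of the two chamber orbits, which are $i$-adjacent for every $i$. So $\C/H$ is thin with $\Aut(\C/H)\cong C_2$, while $N_G(H)/H$ is trivial. Your proof fails exactly at the step that produces $g\in G$ with $gc_0=d_0$ when $d_0$ lies in the other orbit.

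Your fallback of working on a connected component does not help either. Take $\C$ to be the disjoint union of an alternating $8$-cycle and an alternating $4$-cycle. Let $H$ be generated by the automorphism that rotates the $8$-cycle by four steps and fixes the $4$-cycle. Then $\C/H$ is a disjoint union of two alternating $4$-cycles, so $|\Aut(\C/H)|=32$ but $|N_G(H)/H|=16$.

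Both hypotheses hold for $\C(W,S)$, the only case the paper uses, so they should simply be added to the statement.

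A minor simplification: once transitivity gives $g$ with $gc_0=d_0$, the lifting step is dispensable. Write $c^i$ for the $i$-neighbour of $c$. The $i$-panel of $Hc$ in $\C/H$ is $\{Hc,Hc^i\}$ and $(gc)^i=gc^i$. Induction along galleries from $c_0$ then gives $\alpha(Hc)=Hgc$ directly.
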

\begin{proof}
    This is proven in \cite[Proposition 4.2.10]{buekenhout2013diagram} when $\C$ is the chamber system constructed from a Coxeter group. The proof follows the exact same arguments for this more general setting.
\end{proof}

\subsection{Coset chamber systems}\label{sec:CosetChamberSystem}

One important concept in both incidence and chamber systems is the fact that both structures can often be built starting from a group. Here, we recall the construction of a chamber system from a group.

Let $G$ be a group, let $B\leq G$, and let $(G^{(i)})_{i\in I}$ be a family
of subgroups of $G$ such that $B\leq G^{(i)}$ for every $i\in I$.  The
\emph{coset chamber system}
    $\CC\bigl(G,B,(G^{(i)})_{i\in I}\bigr)$
has chamber set $G/B=\{gB\mid g\in G\}$, with equivalence relations defined as
\[
    gB\sim_i hB
    \quad\Longleftrightarrow\quad
    gG^{(i)}=hG^{(i)}.
\]
The subgroup $B$ is called the \emph{Borel subgroup}, and the $G^{(i)}$ are the
\emph{standard minimal parabolic subgroups}.

Left multiplication gives a chamber-transitive action of $G$, with
$B$ being the stabilizer of the chamber $B$.  An $i$-panel of a chamber $gB$ consists of the set of chambers
$gxB$ with $x\in G^{(i)}$, and has size $[G^{(i)}:B]$.  In particular, the
chamber system is thin exactly when $[G^{(i)}:B]=2$ for every $i\in I$.

For $J\subseteq I$, define
    \[G^{(J)}:=\bigl\langle B,G^{(j)}\mid j\in J\bigr\rangle.\]
In particular, $G^{(\varnothing)}=B$.  The $J$-cell containing $wB$ is
\begin{equation}
\label{eq:J-cell-coset-system}
    (wB)J^*=\{wxB\mid x\in G^{(J)}\}.
\end{equation}
The definition of connected and residually-connected can be verified by checking properties of the subgroups $G^{(J)}$, for $J\subseteq I$.

\begin{lemma}[{\cite[Lemma~3.6.7]{buekenhout2013diagram}}]
\label{lem:connected-coset-system}
The coset chamber system $\CC(G,B,(G^{(i)})_{i\in I})$ is connected if and only if
\[
    G=G^{(I)}=\langle G^{(i)}\mid i\in I\rangle.
\]
\end{lemma}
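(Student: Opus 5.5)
The chamber system is connected exactly when every chamber can be reached from the base chamber $B$ by a gallery. The plan is to translate galleries into products of elements of the $G^{(i)}$ and show the two directions separately, using left multiplication by $G$ to reduce everything to galleries starting at $B$.

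First I would describe adjacency concretely. By the remark on panels above, $gB\sim_i hB$ holds if and only if $hB=gxB$ for some $x\in G^{(i)}$. By induction on the length, a gallery $B=g_0B,\ g_1B,\dots,g_nB$ then has endpoint of the form $x_1x_2\cdots x_nB$, where each $x_k$ lies in some $G^{(i_k)}$. Conversely, any such product gives a gallery ending at $x_1\cdots x_nB$: the intermediate chambers are $x_1\cdots x_kB$, and consecutive ones are $i_{k}$-adjacent because
\[
x_1\cdots x_{k-1}G^{(i_k)}=x_1\cdots x_kG^{(i_k)}.
\]
Equivalently, this is the case $J=I$ of \eqref{eq:J-cell-coset-system}: the $I$-cell of $B$ is $\{xB\mid x\in G^{(I)}\}$. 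Here $G^{(I)}=\langle B,G^{(i)}\rangle=\langle G^{(i)}\mid i\in I\rangle$, since $B\le G^{(i)}$ and $I$ is non-empty.

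Next, suppose $G=G^{(I)}$. Then every chamber $gB$ lies in the $I$-cell of $B$, so $\CC$ is connected; the chamber graph is connected because every vertex is joined to $B$.

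Conversely, suppose $\CC$ is connected and take $g\in G$. The chamber $gB$ lies in $BI^*$, so $gB=xB$ for some $x\in G^{(I)}$. Hence $g\in xB\subseteq G^{(I)}$, because $B\le G^{(I)}$. Therefore $G=G^{(I)}$.

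The only point needing care is the description of the cell $BI^*$ as $G^{(I)}$-translates of $B$. It is the inductive step above, and it uses that the adjacency relation is invariant under left multiplication. There is no real obstacle here; the argument is routine.
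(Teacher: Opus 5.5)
Your proof is correct. The paper gives no proof of its own and simply cites \cite[Lemma~3.6.7]{buekenhout2013diagram}; your argument is the standard one, resting on the description \eqref{eq:J-cell-coset-system} of the $I$-cell of $B$ as $\{xB\mid x\in G^{(I)}\}$ and on $G^{(I)}=\langle G^{(i)}\mid i\in I\rangle$, which holds because $B\le G^{(i)}$ and $I\neq\varnothing$.
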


\begin{prop}[{\cite[Theorem~3.6.9]{buekenhout2013diagram}}]
\label{prop:RC_CosetChamberSystem}
Let $I$ be finite, let $B\leq G^{(i)}\leq G$ for every $i\in I$, and assume
that $G=\langle G^{(i)}\mid i\in I\rangle$.  Then the following statements
are equivalent.
\begin{enumerate}
    \item The chamber system $\CC(G,B,(G^{(i)})_{i\in I})$ is residually-connected.
    \item For all $J,K,L\subseteq I$,
    $G^{(L)}\cap G^{(J)}G^{(K)}=G^{(L\cap J)}G^{(L\cap K)}$.
    \item For all $J,K,L\subseteq I$,
    $G^{(J)}G^{(L)}\cap G^{(K)}G^{(L)}=G^{(J\cap K)}G^{(L)}$.
\end{enumerate}
\end{prop}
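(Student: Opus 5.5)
By chamber-transitivity of the left $G$-action we may work at the base chamber $c=B$. We prove (1)$\Leftrightarrow$(3) by translating the cell identity of Lemma~\ref{lem:rc-characterisation} into cosets, and then (2)$\Leftrightarrow$(3) by elementary manipulations of products of subgroups.

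\textbf{(1)$\Leftrightarrow$(3).} The hypothesis $G=\langle G^{(i)}\mid i\in I\rangle$ gives connectedness by Lemma~\ref{lem:connected-coset-system}, so Lemma~\ref{lem:rc-characterisation} applies.

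By \eqref{eq:J-cell-coset-system}, the $J$-cell of $gB$ is $\{gxB\mid x\in G^{(J)}\}$. Taking the $L$-cell of each $gxB$ then gives
\[
    (gB)J^*L^*=\{gxyB\mid x\in G^{(J)},\,y\in G^{(L)}\}.
\]
Since left multiplication by $g$ is an automorphism, it suffices to take $g=1$. Both $G^{(J)}G^{(L)}$ and $G^{(J\cap K)}G^{(L)}$ are unions of left cosets of $B$, because $B\le G^{(L)}$. Hence identity \eqref{eq:rc-cell-identity} at $c=B$ is literally the set equality $G^{(J)}G^{(L)}\cap G^{(K)}G^{(L)}=G^{(J\cap K)}G^{(L)}$ in $G$, which is (3).

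\textbf{Useful special cases.} Since $G^{(\varnothing)}=B$ and $B\le G^{(X)}$ for every $X$, specialising gives:
\begin{itemize}
    \item (2) with $K=\varnothing$: $G^{(L)}\cap G^{(J)}=G^{(L\cap J)}$.
    \item (3) with $L=\varnothing$: $G^{(J)}\cap G^{(K)}=G^{(J\cap K)}$.
\end{itemize}

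\textbf{(2)$\Rightarrow$(3).} The inclusion $\supseteq$ is clear. For $\subseteq$, let $x=al=bl'$ with $a\in G^{(J)}$, $b\in G^{(K)}$ and $l,l'\in G^{(L)}$.
\begin{itemize}
    \item Then $b^{-1}a=l'l^{-1}$ lies in $G^{(L)}\cap G^{(K)}G^{(J)}$. By (2) this equals $G^{(L\cap K)}G^{(L\cap J)}$, so $b^{-1}a=km$ with $k\in G^{(K\cap L)}$ and $m\in G^{(J\cap L)}$.
    \item Now $am^{-1}=bk$ lies in $G^{(J)}\cap G^{(K)}=G^{(J\cap K)}$.
    \item Also $ml\in G^{(L)}$.
\end{itemize}
Hence $x=(am^{-1})(ml)\in G^{(J\cap K)}G^{(L)}$.

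\textbf{(3)$\Rightarrow$(2).} Again $\supseteq$ is clear. For $\subseteq$, take $x=ab\in G^{(L)}$ with $a\in G^{(J)}$ and $b\in G^{(K)}$.
\begin{itemize}
    \item Then $a=xb^{-1}$ lies in $G^{(J)}\cap G^{(L)}G^{(K)}$. This set is contained in $G^{(J)}G^{(K)}\cap G^{(L)}G^{(K)}$, which equals $G^{(J\cap L)}G^{(K)}$ by (3).
    \item So $a=uv$ with $u\in G^{(J\cap L)}$ and $v\in G^{(K)}$.
    \item Then $u^{-1}x=vb$ lies in $G^{(K)}\cap G^{(L)}$, since $u\in G^{(L)}$ and $x\in G^{(L)}$. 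By the special case above this is $G^{(K\cap L)}$.
\end{itemize}
Therefore $x\in G^{(J\cap L)}G^{(K\cap L)}$.

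\textbf{Main obstacle.} The only delicate point is (1)$\Leftrightarrow$(3): one has to check that equality of sets of chambers really coincides with equality of subsets of $G$. This rests on the right $B$-invariance of the products $G^{(J)}G^{(L)}$. The rest is routine bookkeeping with the special cases recorded above.
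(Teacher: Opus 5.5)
Your proof is correct. The paper gives no argument of its own for this statement; it imports it from Buekenhout--Cohen. Your write-up supplies what the citation hides, using exactly the tools the paper recalls alongside it.

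\begin{itemize}
\item Lemma~\ref{lem:connected-coset-system} gives connectedness, so Lemma~\ref{lem:rc-characterisation} applies.
\item At the base chamber $B$, the cell identity of Lemma~\ref{lem:rc-characterisation} becomes (3) verbatim. This is because products such as $G^{(J)}G^{(L)}$ are right $B$-invariant, so intersecting sets of cosets is the same as intersecting their unions in $G$.
\item Left translation by $g$ transports the identity from $B$ to every chamber $gB$.
\end{itemize}

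Your passage between (2) and (3) also checks out line by line. One small point of precision: in (2)$\Rightarrow$(3), the identity $G^{(J)}\cap G^{(K)}=G^{(J\cap K)}$ must be taken from the $K=\varnothing$ specialisation of (2), not from (3). Your list of special cases provides it, but it is worth saying so explicitly.

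Your route also makes visible that (2)$\Leftrightarrow$(3) is pure subgroup arithmetic, independent of the hypothesis $G=\langle G^{(i)}\mid i\in I\rangle$. That hypothesis is needed only for (1)$\Leftrightarrow$(3). This is worth having, since form (3) is the one the paper actually invokes in the proof of Theorem~\ref{thm:quotient_RC}.
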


\subsection{Coxeter groups}

A \emph{Coxeter system} is a pair $(W,S)$, with $S=\{r_i\mid i\in I\}$,
such that
\[
    W=\left\langle r_i\ (i\in I)\ \middle|\
       (r_ir_j)^{m_{ij}}=1\text{ whenever }m_{ij}<\infty\right\rangle,
\]
where $m_{ii}=1$ and, for $i\neq j$, $m_{ij}=m_{ji}$ belongs to
$\{2,3,\ldots\}\cup\{\infty\}$.  Thus each $r_i$ is an involution, and
$m_{ij}=\infty$ means that no relation is imposed on the order of $r_ir_j$.  For
$J\subseteq I$, write $S_J=\{r_j\mid j\in J\}$ and
$W_J=\langle S_J\rangle$, the \emph{standard parabolic subgroup of type
$J$}, with $W_\varnothing=1$; see \cite[Chapter~IV, Section~1]{Bourbaki2006}.

\begin{prop}[{\cite[Chapter~IV, Section~1, Theorem~2 and Exercise~1(b)]{Bourbaki2006}}]
\label{prop:CoxeterProperties}
Let $(W,S)$ be a Coxeter system.  Then
\begin{enumerate}
    \item For $J\subseteq I$, the pair $(W_J,S_J)$ is a Coxeter system
    with the restricted Coxeter matrix.
    \item For $J,K\subseteq I$, we have $W_J\cap W_K=W_{J\cap K}$.
    \item For $J,K,L\subseteq I$, we have
    $W_JW_L\cap W_KW_L=W_{J\cap K}W_L$.
\end{enumerate}
\end{prop}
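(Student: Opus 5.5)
The plan is to derive all three statements from two standard facts about Coxeter systems: the \emph{exchange/deletion condition} for the length function $\ell$ with respect to $S$, and its consequence that the \emph{support} of an element is well defined. Define $\mathrm{supp}(w)\subseteq I$ as the set of $i$ such that $r_i$ occurs in some reduced expression of $w$. I will first show that every reduced expression of $w$ uses exactly the letters indexed by $\mathrm{supp}(w)$. For this I would invoke Matsumoto's theorem (Tits' solution of the word problem): any two reduced expressions are connected by braid moves, and braid moves preserve the set of letters. Alternatively, the deletion condition reduces any expression of $w$ to a reduced one using only its own letters, and then one argues that the letter set does not depend on the choice of reduced word. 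The consequence I need is: if $w$ has \emph{some} expression in letters from $S_J$, then deletion shortens it to a reduced expression still in $S_J$, so $\mathrm{supp}(w)\subseteq J$ and every reduced expression of $w$ lies in $S_J$.

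For (1), let $W'$ be the abstract Coxeter group on generators $\{r'_j\}_{j\in J}$ with the restricted matrix. There is an obvious surjection $W'\to W_J$. By the support fact, a reduced word for $w\in W_J$ in $W$ is a word in $S_J$, so the length of $w$ in $(W_J,S_J)$ equals $\ell(w)$. Therefore $(W_J,S_J)$ inherits the exchange condition from $(W,S)$. By the characterization of Coxeter systems through the exchange condition, $(W_J,S_J)$ is a Coxeter system; its Coxeter matrix is the restricted one because orders of the products $r_ir_j$ are the same in $W_J$ as in $W$. Hence $W'\to W_J$ is an isomorphism. An alternative route to injectivity is to restrict the Tits geometric representation, but I would go via the exchange condition. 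For (2), take $w\in W_J\cap W_K$. Then $\mathrm{supp}(w)\subseteq J$ and $\mathrm{supp}(w)\subseteq K$, so a reduced expression of $w$ lies in $S_{J\cap K}$, i.e. $w\in W_{J\cap K}$. The reverse inclusion is trivial.

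For (3) I would use minimal coset representatives. Each coset $wW_L$ contains a unique element $w^L$ of minimal length, and then $w=w^Lw_L$ with $w_L\in W_L$ and $\ell(w)=\ell(w^L)+\ell(w_L)$. This is the standard consequence of the exchange condition: $w^L$ is characterized by $\ell(w^Lr_l)>\ell(w^L)$ for all $l\in L$. The key claim is
\[
    w\in W_JW_L\iff w^L\in W_J .
\]
The direction ``$\Leftarrow$'' is immediate. For ``$\Rightarrow$'', write $w=xy$ with $x\in W_J$ and $y\in W_L$. Then $wW_L=xW_L$, so $w^L=x^L$. Writing $x=x^Lx_L$ with lengths adding, a reduced expression of $x$ is obtained by concatenating reduced expressions of $x^L$ and $x_L$. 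Hence $\mathrm{supp}(x^L)\subseteq\mathrm{supp}(x)\subseteq J$, and so $w^L\in W_J$. Now let $w\in W_JW_L\cap W_KW_L$. Then $w^L\in W_J\cap W_K=W_{J\cap K}$ by (2), so $w=w^Lw_L\in W_{J\cap K}W_L$. The reverse inclusion is clear.

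The main obstacle is the well-definedness of support, namely that every reduced expression of $w$ uses the same letters. This is what drives (1), (2) and the key step of (3). If I do not want to invoke Matsumoto's theorem, I would prove it by induction on $\ell(w)$ with the exchange condition. Suppose $w=s_1\cdots s_k$ is reduced and $w=t_1\cdots t_k$ is another reduced expression with $t_1\neq s_1$. Exchange yields an expression $w=t_1s_1\cdots\widehat{s_m}\cdots s_k$, and applying induction to $t_1w$ and comparing letter sets needs some care. Everything else is routine bookkeeping with lengths.
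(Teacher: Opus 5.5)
The paper does not prove this proposition; it quotes it from Bourbaki, so the only comparison is with that citation. Your argument is correct and is the standard self-contained route, essentially the one behind the cited result. For (1) and (2) you use that the letter set of a reduced word depends only on the element, that $\ell_J=\ell$ on $W_J$, and the exchange-condition characterization of Coxeter systems. For (3) you use minimal-length representatives of $wW_L$; the length additivity $\ell(x)=\ell(x^L)+\ell(x_L)$ is exactly what gives $\mathrm{supp}(x^L)\subseteq\mathrm{supp}(x)$. What your version adds over the citation is an explicit proof of (3), which Bourbaki leaves as an exercise.

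The one step you leave open, well-definedness of the support, closes without Matsumoto's theorem. Prove by induction on $\ell(w)$ that if $w\in W_X$, then every reduced word $t_1\cdots t_q$ for $w$ has all its letters in $X$:
\begin{itemize}
\item Since $\ell(t_1w)<\ell(w)$, apply the exchange condition to a reduced $X$-word for $w$ (one exists by deletion). This gives $t_1w\in W_X$, hence $t_1\in W_X$.
\item Because $\ell(t_1)=1$, deleting letters from an $X$-word for $t_1$ leaves a reduced $X$-word of length one, so $t_1\in X$.
\item Now apply the induction hypothesis to $t_1w=t_2\cdots t_q$.
\end{itemize}
Taking $X$ to be the letter set of one reduced word of $w$ shows that all reduced words of $w$ use the same letters.

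You also tacitly use that $r_ir_j$ has order exactly $m_{ij}$ in $W$, to identify the Coxeter matrix of $W_J$ with the restricted one. This is standard via the Tits representation and fine to cite.
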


The \emph{Coxeter diagram} $\DD(W,S)$ has vertex set $I$ and an edge
labelled $m_{ij}$ between $i$ and $j$ precisely when $m_{ij}>2$. In particular, a missing edge means that the
two generators commute.  The system $(W,S)$ is \emph{irreducible} if
$\DD(W,S)$ is connected.

The \emph{Coxeter chamber system} of $(W,S)$ is
\[
    \CC(W,S):=\CC\bigl(W,\langle 1\rangle,(\langle r_i\rangle)_{i\in I}\bigr).
\]
Its chambers are the elements of $W$, and the $i$-panel through $w$ is
$\{w,wr_i\}$.  Here $W^{(i)}=\langle r_i\rangle$ and $W^{(J)}=W_J$, so the
$J$-cell through a chamber $w$ corresponds to the coset $wW_J$.  This system is thin,
residually-connected, and $W$ acts simply transitively on it by left
multiplication.  The diagram of the chamber system is denoted $\DD(W,S)$. A $\{i,j\}$-cell is the chamber
system of the dihedral group $W_{\{i,j\}}$, with $2m_{ij}$ chambers when
$m_{ij}<\infty$ and a doubly infinite path when $m_{ij}=\infty$; see
\cite[Sections~4.1-4.2 and Corollary~4.5.16]{buekenhout2013diagram}.

A Coxeter system is \emph{right-angled} if $m_{ij}\in\{2,\infty\}$ for
all distinct pairs $i,j\in I$.  Writing $\Delta=\DD(W,S)$, a right-angled Coxeter group has the following presentation:
\[
    W=\left\langle r_i\ (i\in I)\ \middle|\
        r_i^2=1,\ [r_i,r_j]=1
        \text{ if }i\neq j\text{ and }\{i,j\}\notin E(\Delta)\right\rangle.
\]
Here and below, let $[x,y]:=x^{-1}y^{-1}xy$. An alternative way of obtaining a right-angled Coxeter groups is by using graph products.
For a finite simple graph $\Gamma$ and groups $(A_i)_{i\in V(\Gamma)}$, the
\emph{graph product} $\Gamma\{A_i\}$ is obtained from the free product
$\ast_i A_i$ by imposing $[a,b]=1$ whenever $a\in A_i$, $b\in A_j$, and
$\{i,j\}\in E(\Gamma)$. Thus a right-angled Coxeter group is the graph
product of copies of $\Cyc_2$ over the \emph{commutation graph}
$\Gamma=\Delta^c$.  Notice the opposite edge conventions: edges in
$\Gamma$ mean commutation, whereas edges in $\Delta$ mean non-commutation.
A graph product is called \emph{irreducible} when $\Gamma^c$ is connected,
which corresponds with irreducibility of the Coxeter system.

\subsection{Regular and two-orbit hypertopes}\label{sec:RegTwoOrbHyper}

Abstract polytopes and hypertopes are most commonly defined via incidence systems. In this article, we focus on the chamber system perspective, which is equivalent.

A \emph{hypertope} is a thin and residually-connected chamber system over a finite type set $I$. If the diagram of an hypertope is linear, we say it is a \emph{polytope}.

A hypertope is \emph{regular} if its automorphism group has a single orbit on its chambers. A \emph{two-orbit} hypertope has precisely two orbits of chambers under the action of its automorphism group. 

Let $\mathcal{H}$ be a two-orbit hypertope and let $c \in H$  be a chamber. For every $i \in I$, there is a unique $i$-adjacent chamber $c^i$ to $c$. If for any chamber $c$, the chamber $c^i$ is in the same orbit as $c$, then it will be true for any other chamber, meaning that any chamber $\Bar{c}$ and $\Bar{c}^i$ will be in the same orbit. This is proven for polytopes in \cite[Proposition 2.219]{Pellicer_2025}, however the exact same proof works for hypertopes. Therefore, two-orbit hypertopes can be classified by the indices $i \in I$ for which $c$ and $c^i$ are in the same orbit. For any $J \subset I$, we say that $\mathcal{H}$ is of \emph{type} $2_J$ whenever, for any chamber $c$, it holds that $c$ and $c^j$ are in the same orbit if and only if $j \in J$.

Let $(W,S)$ be a right-angled Coxeter system. The Coxeter chamber system $\C(W,S)$ is always a regular hypertope over $I = |S|$ and all hypertopes can be constructed as quotients of a right-angled Coxeter chamber system. For any choice $J \subset I$, there exists a well-defined group homomorphism $\chi_J \colon W \to C_2$ defined by
$$
\chi_J(r_i) = \begin{cases}0,&i\in J,\\1,&i\notin J.\end{cases}
$$
One should think of $\chi_J$ in terms of colouring of the chamber system. For $i \in J$, chambers that $i$-adjacent have the same colour, whereas for $i \notin J$, they have different colours. The \emph{$J$-colours preserving subgroup} of $W$ is $C_J := \ker(\chi_J)$.

 Whenever a hypertope $\mathcal{H}$ is described as the quotient $\C(W,S)/H$ for some subgroup $H \leq W$, we can then algebraically verify whether $\mathcal{H}$ is regular or of class $2_J$. Indeed, $\mathcal{H}$ is regular if and only if $H$ is a normal subgroup and $\mathcal{H}$ is of class $2_J$ exactly when $N_W(H) = C_J$. Indeed, let $c$ and $d$ be two chambers of $\mathcal{H}=\C(W,S)/H$ that are $j$-adjacent. Then, by definition of quotient chamber systems, there exist two chamber $\hat{c}$ and $\hat{d}$ of $\C(W,S)$ that are also $j$-adjacent and that are lifts of $c$ and $d$, respectively. As $\C(W,S)$ is regular, there exists a $\varphi \in \Aut(\C(W,S))$ such that $\varphi(\hat{c}) = \hat{d}$. Clearly, $\varphi \in C_J$ if and only if $j \in J$. Therefore, as $\Aut(\mathcal{H}) = N_W(H)/H = C_J/H$ by assumption, the automorphism $\varphi$ induces an automorphism of $\mathcal{H}$ if and only if $j \in J$, so that $c$ and $d$ are in the same orbit if and only if $j \in J$.

\subsection{Acylindrically hyperbolic groups} \label{sec:AcylindricallyHyperbolic}

In section~\ref{sec:AppropriateSub}, we will use results of acylindrically hyperbolic actions and groups, following 
\cite[Sections~1 and~2]{OsinAcylindrical} and
\cite[Section~6.1]{DGO}. Here, we lay down the necessary terminology and results to follow that section. Assume that all actions in this subsection are by isometries.

A geodesic metric space $(X,d)$ is \emph{hyperbolic} if there is a constant
$\delta\geq0$ such that every side of every geodesic triangle is contained
in the $\delta$-neighbourhood of the other two sides.  We write $\partial X$
for its Gromov boundary.

\begin{definition}[Acylindrical action]
\label{def:acylindrical}
An action of a group $G$ on a metric space $(X,d)$ is \emph{acylindrical} if,
for every $\varepsilon>0$, there exist $R\geq0$ and an integer $N\geq1$ such
that, whenever $x,y\in X$ satisfy $d(x,y)\geq R$, we have
\[
    \bigl|\{g\in G\mid d(x,gx)\leq\varepsilon,
                       \ d(y,gy)\leq\varepsilon\}\bigr|\leq N.
\]

\end{definition}
In other words, in an acylindrical action there is a uniform bound on the number of group elements that move
two sufficiently distant points by at most $\varepsilon$.
\begin{definition}[Loxodromic element]
\label{def:loxodromic}
Let $G$ act on a hyperbolic space $X$.  An element $h\in G$ is
\emph{loxodromic} if, for some $x\in X$, the orbit map
\[
    \ZZ\longrightarrow X,\qquad m\longmapsto h^m x,
\]
is a quasi-isometric embedding.  Explicitly, there are constants
$\lambda\geq1$ and $c\geq0$ such that
\[
    \lambda^{-1}|m-n|-c
    \leq d(h^m x,h^n x)
    \leq\lambda|m-n|+c
    \qquad(m,n\in\ZZ).
\]

\end{definition}

A loxodromic element always has infinite order and fixes two distinct points
$h^+,h^-\in\partial X$ on the boundary of $X$. These fixed points are obtained as the limits of $h^m x$ as
$m\to+\infty$ and $m\to-\infty$, respectively. Loxodromic elements should be thought as translation along an infinite axis.  Two loxodromic elements
are called \emph{independent} if their pairs of fixed points are disjoint,
see \cite[Section~1]{OsinAcylindrical}.

We now define an non-elementary action as an action having sufficiently interesting dynamics.
\begin{definition}[Non-elementary action]
\label{def:elementary}
Let $G$ act on a hyperbolic space $X$.  Its \emph{limit set}
$\Lambda_X(G)\subseteq\partial X$ is the set of accumulation points in
$\partial X$ of orbits $G \cdot x$.
The action is \emph{elementary} if $|\Lambda_X(G)|\leq2$, and
\emph{non-elementary} if $|\Lambda_X(G)|>2$.
\end{definition}

For an acylindrical action, non-elementarity is equivalent to the existence
of two independent loxodromic elements.  Equivalently, the action has
unbounded orbits and $G$ is not virtually cyclic, where a group is
\emph{virtually cyclic} if it has a cyclic subgroup of finite index;
see \cite[Theorem~1.1]{OsinAcylindrical}. 

We are now ready to define acylindrically hyperbolic groups following \cite[Definition~1.3]{OsinAcylindrical}.
\begin{definition}[Acylindrically hyperbolic]
\label{def:AH}
A group $G$ is \emph{acylindrically hyperbolic} if it admits a
non-elementary acylindrical action on a hyperbolic space.
\end{definition}

Note that the definition of acylindrically hyperbolic groups comes with a specified action. Other actions might not be suitable.

\begin{definition}[WPD element~{\cite[Definition~6.1]{DGO}}]
\label{def:WPD}
Let $G$ act on a hyperbolic space $X$.  An element $h\in G$ satisfies the
\emph{weak proper discontinuity condition}, or is a \emph{WPD element},
if, for every $\varepsilon>0$ and every $x\in X$, there exists an integer
$M\geq1$ such that
\[
    \{g\in G\mid d(x,gx)\leq\varepsilon,
                  \ d(h^M x,gh^M x)\leq\varepsilon\}
\]
is finite.
\end{definition}

Here $M$ may depend on both $x$ and $\varepsilon$.  Every loxodromic element
of an acylindrical action is WPD: choose $M$ with $d(x,h^M x)\geq R$ and
apply Definition~\ref{def:acylindrical}. 

\begin{prop}[{\cite[Lemma~6.5 and Corollary~6.6]{DGO}}]
\label{prop:EG}
Let $h$ be a loxodromic WPD element for an action of $G$ on a hyperbolic
space $X$.  There is a unique maximal virtually cyclic subgroup of $G$
containing $h$, denoted by $E_G(h)$ and called its \emph{maximal elementary
subgroup}.  It is given by
\[
    E_G(h)=\{g\in G\mid
       g^{-1}h^m g=h^{\pm m}\text{ for some integer }m\geq1\}.
\]
Equivalently, $g\in E_G(h)$ if and only if
\[
    g^{-1}h^m g=h^n
    \quad\text{for some }m,n\in\ZZ\setminus\{0\}.
\]
Geometrically, $E_G(h)$ is the setwise stabilizer of
$\{h^+,h^-\}$ in $G$.
\end{prop}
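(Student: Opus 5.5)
The plan is to take the geometric description as the definition, set $E:=\mathrm{Stab}_G(\{h^+,h^-\})$, and prove in turn that (a) $E$ equals the algebraic set in the statement, (b) $E$ is virtually cyclic, and (c) every virtually cyclic subgroup containing $h$ lies in $E$. Uniqueness and maximality then follow at once. Throughout, fix a basepoint $x\in X$ and a quasi-geodesic axis $\ell=\{h^m x\}_{m\in\ZZ}$. By hyperbolicity (Morse lemma), any bi-infinite quasi-geodesic with endpoints $h^\pm$ lies within uniformly bounded Hausdorff distance of $\ell$.

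For (a), the easy inclusion comes first. If $g^{-1}h^mg=h^n$ with $m,n\neq0$, then $g^{-1}h^mg$ is loxodromic with fixed points $g^{-1}h^{\pm}$. These must equal the fixed points $h^\pm$ of $h^n$, so $g$ stabilizes $\{h^+,h^-\}$. For the converse, take $g\in E$. Then $g\ell$ is a quasi-geodesic with the same endpoints, so it lies at bounded distance $D$ from $\ell$. Consider the elements
\[
  f_k:=h^{-a_k}\,g^{-1}h^{k}g,
\]
where $a_k$ is chosen so that $f_k$ moves $x$ a bounded amount. Each $f_k$ coarsely preserves $\ell$ and, since it also moves $h^Mx$ by a bounded amount, it moves $h^Mx$ only boundedly as well. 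The WPD condition, applied with $\varepsilon$ depending on $D$ and the quasi-geodesic constants, says only finitely many such elements exist. By pigeonhole there are $k\neq k'$ with $f_k=f_{k'}$, which yields $g^{-1}h^{m}g=h^{n}$ with $m=k-k'\neq0$. Finally, $n=\pm m$ because conjugation preserves the stable translation length $\lim_j d(x,h^jx)/j>0$.

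For (b), every $g\in E$ coarsely preserves $\ell$, so after composing with a suitable power of $h$ we get an element $h^{j}g$ moving $x$ by at most a uniform constant $\varepsilon_0$. This element also moves $h^Mx$ by a bounded amount, where orientation reversal is handled by allowing $h^{-j}$. WPD then gives a finite set $F$ of such elements. Hence $E=\langle h\rangle F$, so $\langle h\rangle$ has finite index and $E$ is virtually cyclic. For (c), let $V$ be virtually cyclic with $h\in V$. Since $h$ has infinite order, $\langle h\rangle$ has finite index in $V$. For $g\in V$, the subgroup $\langle h\rangle\cap g^{-1}\langle h\rangle g$ has finite index and is nontrivial, so $g^{-1}h^mg=h^n$ for some nonzero $m,n$, and by (a) $g\in E$.

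The main obstacle is the uniform WPD bookkeeping in (a) and (b). One must choose $\varepsilon$ depending only on the Morse constant and the quasi-isometry constants, and check that the relevant elements move \emph{both} $x$ and $h^Mx$ by at most $\varepsilon$, not merely that they coarsely preserve $\ell$. I would first prove a lemma that any element at bounded distance on $\ell$ near $x$, and preserving orientation of the ends, moves every point of $\ell$ by a uniformly bounded amount. This lemma follows from thin quadrilaterals, and the orientation-reversing case reduces to it after squaring.
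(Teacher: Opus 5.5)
The paper gives no proof of its own here; it quotes \cite[Lemma~6.5 and Corollary~6.6]{DGO}, so the comparison is with that argument. Your proof is correct in outline and shares its geometric core with the cited one: the Morse lemma, plus a uniform displacement bound along the axis that feeds the WPD condition. The difference is in how the algebraic part is ordered.

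DGO define $E(h)$ as the set of $g$ with $g\ell$ at finite Hausdorff distance from $\ell$, which is equivalent to your stabilizer of $\{h^+,h^-\}$ by the Morse lemma. They use WPD once, to show that $\langle h\rangle$ has finite index in $E(h)$. Maximality follows because $\langle h\rangle$ has finite index in any virtually cyclic subgroup containing it. Only then do they read off the algebraic description. The normal core of $\langle h\rangle$ in the virtually cyclic group $E(h)$ is some $\langle h^n\rangle$ with $n\geq 1$. Conjugation by $g\in E(h)$ restricts to an automorphism of this infinite cyclic group, so $g^{-1}h^ng=h^{\pm n}$ directly, with no translation lengths.

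In fact your pigeonhole argument on the $f_k$ is redundant. Once (b) is known, your own argument from (c), applied to $V=E$, already gives $g^{-1}h^mg=h^n$ for every $g\in E$. What your ordering buys is a proof of ``stabilizer $\Rightarrow$ algebraic relation'' that does not pass through virtual cyclicity. It also explicitly covers the two reformulations the paper adds without proof: arbitrary nonzero $m,n$, and the boundary stabilizer. The DGO ordering needs WPD only once and gets the $\pm$ sign for free.

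There are two details to tighten.
\begin{enumerate}
\item In (b), suppose $g$ swaps $h^+$ and $h^-$. Then $h^jg$ (or $gh^j$) swaps them for every $j$. So an element $h^jg$ that moves $x$ boundedly sends $h^Mx$ near $h^{-M}x$, and its displacement of $h^Mx$ is roughly $d(h^{-M}x,h^Mx)$. This is not bounded independently of $M$, so WPD gives no control. ``Allowing $h^{-j}$'' does not repair this, and squaring replaces the element you need to count. Instead, run the WPD argument on the subgroup $E^+\leq E$ fixing $h^+$ and $h^-$ individually. It has index at most $2$ in $E$, which suffices for virtual cyclicity. In (a) no such issue arises, since every $f_k$ lies in $E^+$: $g^{-1}h^kg$ fixes both points of $g^{-1}\{h^+,h^-\}=\{h^+,h^-\}$.
\item The sentence in (a) asserting that $f_k$ moves $h^Mx$ boundedly is circular as written. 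What is needed is your proposed lemma, with a displacement bound independent of $M$, because WPD produces $M$ only after $\varepsilon$ has been fixed.
\end{enumerate}
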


In particular, $\langle h\rangle$ has finite index in $E_G(h)$.
If $K\leq G$ contains $h$, restriction of the action preserves
loxodromicity and WPD, and the displayed characterization gives
\[
    E_K(h)=K\cap E_G(h).
\]

\begin{definition}[Non-commensurable~{\cite[Section~2]{hull2017small}}]
\label{def:commensurable}
Two infinite-order elements $a,b\in G$ are \emph{commensurable in $G$} if
there exist $g\in G$ and $m,n\in\ZZ\setminus\{0\}$ such that
\[
    g^{-1}a^m g=b^n.
\]
Otherwise they are \emph{non-commensurable in $G$}.
\end{definition}

We finish this section by citing a few results that will be needed later, rephrased in a convenient way.

\begin{thm}\cite[Theorem 6.8 and Theorem 7.19, adapted]{DGO}\label{thm:DGO_thms}
    Let $G$ be a group acting on a hyperbolic space $(X,d)$ and let $\{h_1,\ldots,h_k\}$ be a collection of pairwise non-commensurable loxodromic WPD elements of $G$. Then:
    \begin{enumerate}
        \item the collection $\{E(h_1),\ldots,E(h_k)\}$ hyperbolically embeds in $G$;
        \item there exists a family of finite subsets $F_i\subseteq E(h_i)\setminus \{1\}$ such that for every collection $N=\{N_i \triangleleft E(h_i)\mid 1\leq i\leq k\}$ satisfying $N_i \cap F_i=\emptyset$, the natural map from $E(h_i)/N_i$ to $G$ is injective for every $1\leq i \leq k$.
    \end{enumerate}
\end{thm}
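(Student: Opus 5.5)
Since this statement is an adaptation of two results of Dahmani, Guirardel and Osin, the plan is not to prove it from scratch. Instead I would reduce each item to the corresponding theorem in \cite{DGO}, checking that the hypotheses as phrased here (Definitions~\ref{def:loxodromic}, \ref{def:WPD} and \ref{def:commensurable}) match theirs.

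For item (1) I would invoke \cite[Theorem~6.8]{DGO}. It states that if $h_1,\ldots,h_k$ are loxodromic WPD elements for an action of $G$ on a hyperbolic space, and are pairwise non-commensurable, then $\{E(h_1),\ldots,E(h_k)\}$ is hyperbolically embedded in $G$. Two checks are needed.
\begin{itemize}
\item By Proposition~\ref{prop:EG}, each $E(h_i)=E_G(h_i)$ exists and is virtually cyclic, being the setwise stabiliser of $\{h_i^+,h_i^-\}$.
\item Our notion of non-commensurability agrees with the one used in \cite{DGO}. Concretely: if $g^{-1}h_i^mg=h_j^n$ for some $m,n\neq0$, then $g^{-1}E(h_i)g=E(h_j)$, since both are the unique maximal elementary subgroup containing a common infinite-order element. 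Conversely, $g^{-1}E(h_i)g=E(h_j)$ forces $g^{-1}h_i^mg=h_j^n$ for some $m,n\neq 0$, because $\langle h_j\rangle$ has finite index in $E(h_j)$.
\end{itemize}
Hence the two formulations of the hypothesis coincide, and (1) follows directly.

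For item (2) I would apply the group-theoretic Dehn filling theorem \cite[Theorem~7.19]{DGO} to the hyperbolically embedded collection from (1). That theorem gives a finite set $F\subseteq G\setminus\{1\}$ with the following property: whenever $N_i\trianglelefteq E(h_i)$ and $N_i\cap F=\emptyset$ for all $i$, the natural map
\[
E(h_i)/N_i\longrightarrow G/\langle\!\langle N_1\cup\cdots\cup N_k\rangle\!\rangle
\]
is injective for each $i$. Setting $F_i:=F\cap E(h_i)$ gives finite subsets of $E(h_i)\setminus\{1\}$. Moreover $N_i\cap F_i=\emptyset$ is equivalent to $N_i\cap F=\emptyset$, because $N_i\subseteq E(h_i)$. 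This yields (2).

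The one point needing care, and the main obstacle in a faithful statement, is the meaning of ``the natural map to $G$''. The target must be the quotient $G/\langle\!\langle \bigcup_i N_i\rangle\!\rangle$, not $G$ itself; the map $E(h_i)/N_i\to G$ is not even well defined unless $N_i=1$. So I would state the result with this quotient as the target and verify that later uses in Section~\ref{sec:AppropriateSub} only need this version.

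A secondary, purely bookkeeping check is that \cite{DGO} formulate WPD for a fixed basepoint. For an isometric action the condition is independent of the basepoint, up to changing $\varepsilon$ and $M$, so Definition~\ref{def:WPD} (which quantifies over all $x$) is equivalent to theirs.
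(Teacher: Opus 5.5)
Your proposal is correct and follows the same route as the paper, which offers no argument beyond citing \cite[Theorems~6.8 and~7.19]{DGO}. Your correction of item (2) is also right: the injective map must land in $G/\langle\langle N_1\cup\cdots\cup N_k\rangle\rangle$, not in $G$, and this corrected form is exactly what the paper uses later when it embeds $\langle a\rangle/\langle a^{2k}\rangle$ and $\langle b\rangle/\langle b^{3k}\rangle$ into $C_J/H$.
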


We say a subgroup $H\leq G$ is \emph{$s$-normal} if $|H\cap H^g|=\infty$ for all $g\in G$.
\begin{lemma}\cite[Lemma 7.1]{OsinAcylindrical}\label{lem:snormal_non_elem}
    Let $G$ be a group acting acylindrically and non-elementarily on a hyperbolic space $S$. Then every $s$-normal subgroup of $G$ acts non-elementarily.
\end{lemma}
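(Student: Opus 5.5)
The plan is to combine Osin's trichotomy for acylindrical actions \cite[Theorem~1.1]{OsinAcylindrical} with the definition of $s$-normality. Let $H\leq G$ be $s$-normal. Taking $g=1$ gives $|H|=\infty$. The restriction of an acylindrical action to a subgroup is still acylindrical, since the sets in Definition~\ref{def:acylindrical} only get smaller. So by Osin's trichotomy, $H$ falls into exactly one of three cases: it has bounded orbits; it is virtually cyclic and contains a loxodromic element; or it acts non-elementarily. I would rule out the first two cases separately. Throughout, I use the convention $H^g=g^{-1}Hg$.

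\emph{Bounded orbits.} Fix $x\in S$ and let $D=\operatorname{diam}(Hx)<\infty$. Every element of $H$ moves $x$ by at most $D$. Every element $g^{-1}ag$ of $H^g$ (with $a\in H$) moves $g^{-1}x$ by $d(ag x, gx)$, and this is bounded by $D':=\operatorname{diam}(H\,gx)$. Here I need a bound uniform in $g$, which is not automatic. To get it, I would choose $x$ with some care, or instead run the argument with a quasi-centre. Since $Hx$ is bounded, $H$ has a quasi-centre $c$ moved by at most some $\varepsilon=\varepsilon(\delta,D)$ by every $h\in H$. Then every element of $H^g$ moves $g^{-1}c$ by at most $\varepsilon$. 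Now let $R,N$ be the constants from acylindricity for this $\varepsilon$. Because $G$ acts non-elementarily, its orbits are unbounded, so there is $g\in G$ with $d(c,g^{-1}c)\geq R$. Every element of $H\cap H^g$ moves both $c$ and $g^{-1}c$ by at most $\varepsilon$, so $|H\cap H^g|\leq N$. This contradicts $s$-normality.

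\emph{Lineal case.} Suppose $H$ is virtually cyclic and contains a loxodromic element $h$. Then the limit set of $H$ is $\{h^+,h^-\}$, and every infinite-order element of $H$ has a power equal to a nonzero power of $h$. Hence every infinite subgroup of $H$ contains a loxodromic element with fixed-point pair $\{h^+,h^-\}$. Since $|\Lambda_S(G)|>2$, I would find $k\in G$ with $k\{h^+,h^-\}\cap\{h^+,h^-\}=\varnothing$. One way is to take a loxodromic $g$ independent of $h$, which exists by non-elementarity. Then $g^n$ fixes only $g^{\pm}$ on the boundary, and the north--south dynamics of $g$ push $h^{\pm}$ towards $g^+$. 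So for $n$ large, $k=g^n$ moves both $h^+$ and $h^-$ off the set $\{h^+,h^-\}$. The subgroup $H^k$ is virtually cyclic, and its loxodromic elements have fixed-point pair $k^{-1}\{h^{\pm}\}$. By $s$-normality $H\cap H^{k}$ is infinite, so it contains a loxodromic element. Its fixed-point pair must equal both $\{h^\pm\}$ and $k^{-1}\{h^\pm\}$, and these two sets are disjoint, which is a contradiction.

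The remaining case is that $H$ acts non-elementarily, which is the claim. I expect the main obstacle to be the uniformity in the bounded-orbit case: one needs a point moved a uniformly small amount by all of $H$, so that the same $\varepsilon$ works after conjugation. The quasi-centre argument in a $\delta$-hyperbolic space handles this. A second, smaller point is the choice of $k$ in the lineal case, which rests on the standard north--south dynamics of loxodromic elements.
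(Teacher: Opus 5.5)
Your proof is correct. The paper does not prove this lemma; it imports it from \cite[Lemma~7.1]{OsinAcylindrical}. Your route is a sound, self-contained argument: restrict the action to $H$, apply Osin's trichotomy, and use $s$-normality to exclude the bounded-orbit and lineal cases.

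One correction: the ``main obstacle'' you name in the bounded-orbit case does not exist. It comes from a miscomputed displacement. For $a\in H$ we have $(g^{-1}ag)(g^{-1}x)=g^{-1}ax$, so $g^{-1}ag$ moves $g^{-1}x$ by exactly $d(x,ax)\leq D$, uniformly in $g$. The quantity $d(agx,gx)$ you wrote is how far $g^{-1}ag$ moves $x$ itself. So you can take any $x$ and $\varepsilon=\max(D,1)$, and the quasi-centre is unnecessary. Your final version is still valid, because there you use the correct computation at $g^{-1}c$.

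In the lineal case, your north--south dynamics argument works. You could instead avoid the boundary entirely with Proposition~\ref{prop:EG}, which applies because loxodromic elements of an acylindrical action are WPD. Take an infinite-order $\ell\in H\cap H^g$. Then $\ell^{p}=h^{q}$ and $\ell^{p'}=g^{-1}h^{q'}g$ for some nonzero $p,q,p',q'$. Hence $g^{-1}h^{pq'}g=h^{p'q}$, so $g\in E_G(h)$. Since this holds for every $g\in G$, we get $G=E_G(h)$, which is virtually cyclic and contradicts non-elementarity. This also removes the need to produce an element $k$ moving $\{h^+,h^-\}$ off itself.
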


\section{Quotients of coset chamber systems} \label{sec:quotientChamberSystems}

Let $I$ be finite and $\CC=\CC\bigl(G,B,(G^{(i)})_{i\in I}\bigr)$ be a coset chamber system. Moreover, let $H\leq G\leq \Aut(\CC)$, acting on $\CC$ by left multiplication.  The chambers of
$\CC/H$ are the double cosets $HgB$, and $i$-adjacency is given by\begin{equation*}
HcB \sim_i HdB \Leftrightarrow \exists h\in H: hcB \sim_i dB,\ \textnormal{for }cB,dB\in C.\end{equation*}
Notice that the $i$-adjacency can be described equivalently as 
\begin{equation}
\label{eq:quotient-double-coset-adjacency-simp}HcB \sim_i HdB \Longleftrightarrow HcG^{(i)}=HdG^{(i)}.
\end{equation}

When $H\trianglelefteq G$, this quotient turn out to be isomorphic to the coset chamber system
\[\CC\bigl(G/H,BH/H,(G^{(i)}H/H)_{i\in I}\bigr).\]
For non-normal subgroups $H$, the natural description is instead in terms of double cosets. In either case, the following lemma describes the $J$-cells going through a chamber $HwB$ and the chambers of $(HwB)J^*B^*$.

\begin{lemma}
\label{lem:quotient-cell-description}
Let $w\in G$ and $J,K\subseteq I$.  Then
\begin{enumerate}
    \item $(HwB)J^* =\{HwxB\mid x\in G^{(J)}\}$;
    \item $(HwB)J^*K^* =\{HwxyB\mid x\in G^{(J)},\ y\in G^{(K)}\}$.
\end{enumerate}
\end{lemma}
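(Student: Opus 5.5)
We prove part (1) by a double inclusion. Part (2) then follows by applying (1) to each chamber of $(HwB)J^*$ and taking the union.

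For the inclusion $\supseteq$ in (1), take $x\in G^{(J)}$. Since $G^{(J)}=\langle B,G^{(j)}\mid j\in J\rangle$, we can write $x=b_0g_1g_2\cdots g_m$ with $b_0\in B$ and each $g_t\in G^{(j_t)}$ for some $j_t\in J$. Here we use that $B\le G^{(j)}$, so the factors from $B$ can be absorbed into the neighbouring $G^{(j)}$-factors. Put $x_t=b_0g_1\cdots g_t$. Then $x_{t-1}^{-1}x_t=g_t\in G^{(j_t)}$, so
\[
wx_{t-1}G^{(j_t)}=wx_tG^{(j_t)}.
\]
Hence $Hwx_{t-1}G^{(j_t)}=Hwx_tG^{(j_t)}$, and by \eqref{eq:quotient-double-coset-adjacency-simp} this gives $Hwx_{t-1}B\sim_{j_t}Hwx_tB$. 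Finally $Hwx_0B=Hwb_0B=HwB$, so we obtain a $J$-gallery in $\CC/H$ from $HwB$ to $HwxB$.

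For the inclusion $\subseteq$, it is enough to show that the set $\{HwxB\mid x\in G^{(J)}\}$ is closed under $j$-adjacency for every $j\in J$; induction on the length of a $J$-gallery then finishes the argument.

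1. Suppose $HwxB\sim_j HdB$ with $j\in J$. By \eqref{eq:quotient-double-coset-adjacency-simp} we have $HwxG^{(j)}=HdG^{(j)}$.
2. So $d=hwxg$ for some $h\in H$ and $g\in G^{(j)}$.
3. Then $HdB=HhwxgB=Hw(xg)B$.
4. Since $xg\in G^{(J)}$, the chamber $HdB$ lies in the set.

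The only point needing care is the case where the representative $d$ of a double coset is changed. Because the adjacency in \eqref{eq:quotient-double-coset-adjacency-simp} is stated in terms of the double coset $HdG^{(j)}$, it does not depend on the choice of representative, so this causes no problem. Note also that the representative $w$ of the starting chamber does not matter: replacing $w$ by $h'wb$ only changes $x$ to $bx$, and $bx$ is still in $G^{(J)}$.

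For (2), by definition $(HwB)J^*K^*=\bigcup_{c\in(HwB)J^*}cK^*$. By (1), each such $c$ has the form $HwxB$ with $x\in G^{(J)}$. Applying (1) again with $wx$ in place of $w$ and $K$ in place of $J$ gives $cK^*=\{HwxyB\mid y\in G^{(K)}\}$. Taking the union over $x$ yields the stated set.

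I expect no real obstacle. The main subtlety is the $\subseteq$ direction of (1), namely checking that adjacency in the quotient only introduces right multiplication by elements of $G^{(j)}$, up to the left $H$-action. This is exactly what \eqref{eq:quotient-double-coset-adjacency-simp} provides.
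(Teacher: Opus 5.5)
Your proof is correct and follows the same approach as the paper: both rely on the adjacency description \eqref{eq:quotient-double-coset-adjacency-simp}, iterated along $J$-galleries for (1), and then apply (1) a second time to each chamber of $(HwB)J^*$ for (2). The only difference is that you write out the $\supseteq$ inclusion explicitly, by decomposing $x\in G^{(J)}$ as $b_0g_1\cdots g_m$ and building the gallery, whereas the paper leaves this step implicit.
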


\begin{proof}
By \eqref{eq:quotient-double-coset-adjacency-simp}, an $i$-adjacency from $HwB$
leads to a chamber of the form $HwxB$ with $x\in G^{(i)}$, and every such
chamber is $i$-adjacent to $HwB$.  Iterating this gives that any chamber reached by a $J$-gallery starting in $(HwB)$ must have the form given in $(a)$. Applying the same argument for the chambers of $(HwB)J^*K^*$ proves $(b)$.
\end{proof}

Let $N\trianglelefteq G$.  We say that $N$ satisfies the
\emph{three-factor avoidance condition} if
\begin{equation}
\label{eq:trivial_intersection_N}
    N\cap G^{(J)}G^{(K)}G^{(L)}=\{1\}
    \qquad\text{for all }J,K,L\subsetneq I.
\end{equation}
Empty subsets and repetitions are
allowed.  In particular, by taking two of the three subsets to be empty,
\eqref{eq:trivial_intersection_N} implies
\begin{equation}
\label{eq:one-factor-avoidance}
    N\cap G^{(J)}=\{1\}
    \qquad\text{for every }J\subsetneq I.
\end{equation}

Consider the following quotient map 
\[
    q_H\colon\CC\longrightarrow\CC/H,
    \qquad q_H(gB)=HgB.
\]
We show that $q_H$ induces isomorphisms of $J$-cells.
\begin{prop}
\label{prop:correspondenceResidues}
Assume that $N\trianglelefteq G$ satisfies
\eqref{eq:trivial_intersection_N}, and let $H\leq N$.  Fix $w\in G$.  For every
proper subset $J\subsetneq I$, the restriction
\begin{equation*}
    q_H\big|_{(wB)J^*}\colon (wB)J^*\longrightarrow(HwB)J^*
\end{equation*}
is an isomorphism of chamber systems over $J$.
\end{prop}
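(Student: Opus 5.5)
We have to show that $q_H$ restricted to the $J$-cell $(wB)J^*$ is a bijection onto $(HwB)J^*$, that it preserves $i$-adjacency for $i\in J$, and that its inverse also preserves $i$-adjacency. The plan is to translate all three properties into statements about double cosets. Then the only substantial input is that $H\le N$ and that $N$ meets products of at most three standard parabolic subgroups trivially, which is \eqref{eq:trivial_intersection_N}.

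\emph{Surjectivity and adjacency.} By \eqref{eq:J-cell-coset-system} the cell $(wB)J^*$ consists of the chambers $wxB$ with $x\in G^{(J)}$. By Lemma~\ref{lem:quotient-cell-description}(a) the cell $(HwB)J^*$ consists of the chambers $HwxB$ with $x\in G^{(J)}$. So $q_H$ maps the first cell onto the second. If $wxB\sim_i wyB$ then $wxG^{(i)}=wyG^{(i)}$, hence $HwxG^{(i)}=HwyG^{(i)}$. By \eqref{eq:quotient-double-coset-adjacency-simp} the images are then $i$-adjacent, so $q_H$ restricted to the cell is a homomorphism.

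\emph{Injectivity.} Suppose $HwxB=HwyB$ with $x,y\in G^{(J)}$. Then $wx b=h wy$ for some $h\in H$ and $b\in B$. This gives
\[
w^{-1}hw = x b y^{-1}\in G^{(J)}BG^{(J)}=G^{(J)}.
\]
Since $N$ is normal, $w^{-1}hw\in N$. By \eqref{eq:one-factor-avoidance} (here $J\subsetneq I$ is used), $w^{-1}hw=1$, so $h=1$ and $wxB=wyB$.

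\emph{Reflecting adjacency.} Suppose $HwxB\sim_i HwyB$ with $i\in J$. By \eqref{eq:quotient-double-coset-adjacency-simp}, $HwxG^{(i)}=HwyG^{(i)}$, so $wxg=hwy$ for some $h\in H$ and $g\in G^{(i)}$. As before,
\[
w^{-1}hw=xgy^{-1}\in G^{(J)}G^{(i)}G^{(J)}=G^{(J)}.
\]
Hence $h=1$, so $wxG^{(i)}=wyG^{(i)}$, which says exactly that $wxB\sim_i wyB$ in $\CC$.

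\emph{Main obstacle.} There is no serious difficulty. The one point that needs care is conjugation: the relevant element of $H$ must be conjugated by $w$ to land in a parabolic product, and this is where the normality of $N$ (not of $H$) enters. Only the single-factor case \eqref{eq:one-factor-avoidance} is needed here. The three-factor version would be needed for sets like $(HwB)J^*K^*$, but not for a single cell.
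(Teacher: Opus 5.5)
Your proof is correct and follows essentially the same route as the paper. Surjectivity comes from the two cell descriptions. For injectivity and for reflecting $i$-adjacency, you conjugate the relevant element of $H$ by $w$ into $N\cap G^{(J)}$ and apply \eqref{eq:one-factor-avoidance}; the only cosmetic difference is that you combine the paper's two elements $h,h'\in H$ into a single $h$.
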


\begin{proof}
Surjectivity follows from \eqref{eq:J-cell-coset-system} and Lemma~\ref{lem:quotient-cell-description}$(a)$.  Suppose that
\[
    HwxB=HwyB
    \qquad(x,y\in G^{(J)}).
\]
Then there exist $h,h'\in H$ and $b,b'\in B$ such that
\[
    hwxb=h'wyb',
\]
which is equivalent to
\[
 h'^{-1}hwx=wyb'b^{-1}.
\]
Consequently,
\[
w^{-1}h'^{-1}hw=yb'b^{-1} x^{-1}.
\]
Since $H\leq N$ and $N\trianglelefteq G$, the left-hand side belongs to $N$. As $yb'b^{-1}x^{-1}\in G^{(J)}$, by
Equation~\eqref{eq:one-factor-avoidance} we have $h'^{-1}h=1$, or equivalently $h'=h$. Hence,
$xB=yB$ and thus $q_H\big|_{(wB)J^*}$ is injective.

It remains to show that adjacencies are the same. 
Suppose that $wxB\sim_i wyB$, for $i\in J$ and $wxB,wyB\in (wB)J^*$. Then by the $i$-adjacency of the quotient, $HwxB\sim_i HwyB$.
Conversely, let $i\in J$ and
suppose that $HwxB\sim_i HwyB$.  By
Equation~\eqref{eq:quotient-double-coset-adjacency-simp}, there exist $h,h'\in H$ and
$g,g'\in G^{(i)}$ such that
\[
    hwxg=h'wyg'.
\]
It follows that
\[
w^{-1}h'^{-1}hw=yg'g^{-1}x^{-1}.
\]
As before, this forces $h=h'$, meaning $xg = yg'$.  Hence
$xG^{(i)}=yG^{(i)}$, so $wxB\sim_i wyB$ in $\CC$.  Therefore the restriction
of $q_H$ is an isomorphism of chamber systems over $J$.
\end{proof}

\begin{coro}
\label{cor:quot_transfer_thin_thick_linearity}
Let $|I|\geq2$, $N\trianglelefteq G$ a normal subgroup that satisfies
the three-factor avoidance condition, and consider $H\leq N$.
\begin{enumerate}
    \item If $\CC$ is connected, then $\CC/H$ is connected;
    \item If $\CC$ is firm, thin, or
thick, then $\CC/H$ is firm, thin, or thick, respectively;
    \item If $|I|\geq 3$ and $\CC$ belongs to a diagram $\DD$, then $\CC/H$ belongs to the same diagram. In particular, a linear diagram remains linear after taking the
quotient.
\end{enumerate}
\end{coro}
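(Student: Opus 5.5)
Throughout, I take $\CC=\CC\bigl(G,B,(G^{(i)})_{i\in I}\bigr)$, $N\trianglelefteq G$ satisfying \eqref{eq:trivial_intersection_N}, and $H\leq N$.

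\textbf{Part (1): connectedness.} I will work with the quotient map $q_H$, which is surjective. It is also a homomorphism of chamber systems: if $gB\sim_i g'B$, then $gG^{(i)}=g'G^{(i)}$, so $HgG^{(i)}=Hg'G^{(i)}$. By \eqref{eq:quotient-double-coset-adjacency-simp} this means $HgB\sim_i Hg'B$. Hence any gallery in $\CC$ maps to a gallery in $\CC/H$. If $\CC$ is connected, any two chambers of $\CC/H$ lift to chambers joined by a gallery, and the image of that gallery joins them. So $\CC/H$ is connected.

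\textbf{Part (2): firm, thin, thick.} Each $i$-panel of $\CC/H$ is an $\{i\}$-cell $(HwB)\{i\}^*$. Since $|I|\geq 2$, the set $\{i\}$ is a proper subset of $I$. Proposition~\ref{prop:correspondenceResidues} then gives an isomorphism between $(wB)\{i\}^*$, the $i$-panel of $wB$ in $\CC$, and the $i$-panel of $HwB$ in $\CC/H$. In particular the two panels have the same size. Every panel of $\CC/H$ arises this way by choosing a lift $wB$, so the lower bounds (firm: at least two; thick: at least three) and the exact count (thin: exactly two) transfer directly.

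\textbf{Part (3): diagrams.} When $|I|\geq 3$, every pair $\{i,j\}\subseteq I$ is a proper subset. Each $\{i,j\}$-cell of $\CC/H$ has the form $(HwB)\{i,j\}^*$. By Proposition~\ref{prop:correspondenceResidues}, it is isomorphic as a chamber system over $\{i,j\}$ to the cell $(wB)\{i,j\}^*$ of $\CC$. That cell has an allowed isomorphism type, so the quotient cell has the same type and $\CC/H$ belongs to $\DD$. Linearity is a property of the labels $m_{ij}$ alone, so a linear diagram stays linear.

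The only point needing care is in part (3). It is that \emph{every} rank-two cell of the quotient lifts to some rank-two cell of $\CC$. This follows from Lemma~\ref{lem:quotient-cell-description}(a), which describes every cell through $HwB$ as the image of the cell through $wB$. The hypothesis $|I|\geq 3$ is needed because Proposition~\ref{prop:correspondenceResidues} applies only to proper subsets $J\subsetneq I$.
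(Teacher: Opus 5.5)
Your proposal is correct and follows the paper's proof step for step. Connectedness comes from pushing galleries forward along the surjective homomorphism $q_H$, and Proposition~\ref{prop:correspondenceResidues} is applied to the proper subsets $\{i\}$ (using $|I|\geq 2$) and $\{i,j\}$ (using $|I|\geq 3$) to identify the panels and rank-two cells of $\CC/H$ with those of $\CC$; your extra checks that $q_H$ preserves adjacency and that every quotient cell is the image of a cell of $\CC$ (Lemma~\ref{lem:quotient-cell-description}) just make explicit what the paper leaves implicit.
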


\begin{proof}
$(a)$ The image under $q_H$ of a gallery is a gallery, so connectedness passes to the
quotient.  

$(b)$ For $i\in I$, the singleton $\{i\}$ is a proper subset of $I$.
Proposition~\ref{prop:correspondenceResidues} therefore identifies each
$i$-panel of $\CC/H$ with the corresponding $i$-panel of $\CC$, proving
that properties such as firmness, thinness, and thickness transfer to the quotient chamber system.

$(c)$ If $|I|\geq3$, each subset $\{i,j\}\subset I$ is proper. Proposition~\ref{prop:correspondenceResidues}
identifies every rank two cell in the quotient with a rank two cell
of $\CC$.  Hence, as the Buekenhout diagram is defined on the rank two cells, the diagram itself is
unchanged. In particular, if the diagram is linear, it will remain linear.
\end{proof}

We now show that the quotients we consider preserve residual-connectedness.

\begin{thm}
\label{thm:quotient_RC}
Let $ \CC=\CC\bigl(G,B,(G^{(i)})_{i\in I}\bigr)$
be a residually-connected coset chamber system over the finite type set $I$.  Let $N\trianglelefteq G$ satisfy
the three-factor avoidance condition, and consider $H\leq N$.  Then $\CC/H$ is
residually-connected.
\end{thm}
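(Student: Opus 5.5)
We apply the characterisation of Lemma~\ref{lem:rc-characterisation} to $\CC/H$. Since $\CC$ is residually-connected, it is connected. Corollary~\ref{cor:quot_transfer_thin_thick_linearity}(a) then shows that $\CC/H$ is connected, so the lemma applies. It therefore suffices to prove that, for every chamber $HwB$ of $\CC/H$ and all $J,K,L\subseteq I$,
\[
    (HwB)J^*L^*\cap(HwB)K^*L^*=(HwB)(J\cap K)^*L^*.
\]
The inclusion ``$\supseteq$'' holds in any chamber system, because a $(J\cap K)$-gallery is both a $J$-gallery and a $K$-gallery. Only ``$\subseteq$'' needs work.

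First we dispose of the degenerate cases where one of $J,K,L$ equals $I$, since the three-factor avoidance condition \eqref{eq:trivial_intersection_N} is only available for proper subsets.
\begin{itemize}
    \item If $L=I$, all three sets equal the whole chamber set, because $\CC/H$ is connected.
    \item If $J=I$, the left side is $C/H\cap(HwB)K^*L^*=(HwB)K^*L^*$. Since $J\cap K=K$, this is exactly the right side.
    \item The case $K=I$ is symmetric to $J=I$.
\end{itemize}
So we may assume $J,K,L\subsetneq I$.

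In the main case, Lemma~\ref{lem:quotient-cell-description}(b) says a chamber of the left side can be written as $HwxyB=Hwx'y'B$ with $x\in G^{(J)}$, $x'\in G^{(K)}$ and $y,y'\in G^{(L)}$. Hence there are $h,h'\in H$ and $b,b'\in B$ with $hwxyb=h'wx'y'b'$, which rearranges to
\[
    w^{-1}h'^{-1}hw = x'\,(y'b'b^{-1}y^{-1})\,x^{-1}\in G^{(K)}G^{(L)}G^{(J)},
\]
using $B\le G^{(L)}$. Since $H\le N\trianglelefteq G$, the left-hand side lies in $N$. The three-factor avoidance condition, applied with the proper subsets $K,L,J$, then forces $h=h'$. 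Consequently $xyB=x'y'B$, so $xy\in G^{(J)}G^{(L)}\cap G^{(K)}G^{(L)}$, absorbing $B$ into $G^{(L)}$ again.

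Because $\CC$ is residually-connected and $G=\langle G^{(i)}\rangle$ (which follows from connectedness via Lemma~\ref{lem:connected-coset-system}), Proposition~\ref{prop:RC_CosetChamberSystem}(3) gives
\[
    G^{(J)}G^{(L)}\cap G^{(K)}G^{(L)}=G^{(J\cap K)}G^{(L)}.
\]
Thus $xy=zu$ with $z\in G^{(J\cap K)}$ and $u\in G^{(L)}$. By Lemma~\ref{lem:quotient-cell-description}(b), $HwxyB=HwzuB$ lies in $(HwB)(J\cap K)^*L^*$, which proves ``$\subseteq$''.

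The main obstacle is the step that lifts an equality of double cosets in the quotient to an equality of cosets in $G$. The product $x'(y'b'b^{-1}y^{-1})x^{-1}$ has to land in a product of exactly three parabolic subgroups of proper types, which is why we absorb $B$ into $G^{(L)}$. The boundary cases where a type set equals $I$ must also be handled separately, since avoidance is not assumed there.
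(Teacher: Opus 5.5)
Your proof is correct and follows the paper's argument essentially step for step. You reduce to the cell identity of Lemma~\ref{lem:rc-characterisation}, dispose of the cases where one of $J,K,L$ equals $I$, use three-factor avoidance on $w^{-1}h'^{-1}hw\in G^{(K)}G^{(L)}G^{(J)}$ to force $h=h'$, and then invoke Proposition~\ref{prop:RC_CosetChamberSystem}(3) in $G$. You are slightly more explicit than the paper about the boundary cases and about the hypothesis $G=\langle G^{(i)}\mid i\in I\rangle$ needed for that proposition, but the route is the same.
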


\begin{proof}
The quotient chamber system $\C/H$ is connected because $\C$ is connected.  Fixing now a chamber
$c=HwB$ of $\CC/H$ and subsets $J,K,L\subseteq I$,  we will prove the identity in
Lemma~\ref{lem:rc-characterisation}.
The direct inclusion is immediate and hence, we will prove the reverse inclusion
\[
    c(J\cap K)^*L^*\supseteq cJ^*L^*\cap cK^*L^*.
\]
 If $J=I$ or $K=I$, this inclusion is also immediate.  Additionally, if
$L=I$, all three sets are the whole quotient chamber system, hence we have trivially the inclusion. 

Let us then assume that $J,K,L\subsetneq I$ and consider $z\in cJ^*L^*\cap cK^*L^*$.  By
Lemma~\ref{lem:quotient-cell-description}, there exist
$x\in G^{(J)}$, $y\in G^{(K)}$, and $u,v\in G^{(L)}$
such that
\[
    z=HwxuB=HwyvB.
\]
Thus, by the equality of double-cosets, there exists $h,h'\in H$ and $b,b'\in B$ such that
$hwxub=h'wyvb'.$
Rearranging this equality gives
\begin{equation*}
    w^{-1}h'^{-1}hw=yvb'b^{-1}u^{-1}x^{-1}.
\end{equation*}
Since $y\in G^{(K)}$, $vb'b^{-1}u^{-1}\in G^{(L)}$ and $x\in G^{(J)}$, the right-hand side of the 
equality above is an element of
$G^{(K)}G^{(L)}G^{(J)}$. Similar to arguments before, the left-hand side belongs to $N$, and hence, by the three-factor avoidance condition given by Equation~\eqref{eq:trivial_intersection_N},
$h=h'$. Thus, $w^{-1}h'^{-1}hw=1$ and $yvb'= xub$.

As $\C$ is a residually-connected coset chamber system, by Proposition~\ref{prop:RC_CosetChamberSystem}, we have $G^{(J)}G^{(L)} \cap G^{(K)}G^{(L)} =G^{(J\cap K)}G^{(L)}$. Since $yvb'=xub\in G^{(J)}G^{(L)} \cap G^{(K)}G^{(L)}$, we have $yvb'=xub\in G^{(J\cap K)}G^{(L)}$.
Considering Lemma~\ref{lem:quotient-cell-description}, this means $z\in c(J\cap K)^*L^*$, finalizing the proof.
Hence $\CC/H$ is
residually-connected.
\end{proof}

This leads us to the following corollary, which can be seen as a generalization of \cite[Theorem 3.2]{fernandes2020exploration}.
\begin{coro}
Let $\C$ be a regular hypertope with diagram $\D$ and automorphism group $G := \langle \rho_0,\ldots,\rho_{n-1}\rangle$, with $n\geq 3$. Let $N\triangleleft G$ be a normal subgroup of $G$ such that it satisfies the three-factor avoidance condition. Then $\C/N$ is a regular hypertope with diagram $\D$.
\end{coro}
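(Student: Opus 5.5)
We realise the regular hypertope $\C$ as a coset chamber system and then apply the results of this section to the quotient by $N$. Since $\C$ is regular and thin, $G$ acts simply transitively on the chambers, as noted in Section~\ref{sec:ChamberSystems}. Fix a base chamber $c$. Then $\C$ is isomorphic to the coset chamber system
\[
\CC\bigl(G,\{1\},(\langle\rho_i\rangle)_{i\in I}\bigr),
\]
where $\rho_i$ is the unique automorphism sending $c$ to its $i$-adjacent chamber $c^i$. Because $\C$ is thin, $\rho_i$ is an involution: $\rho_i^2$ fixes $c$, hence is trivial by simple transitivity. Under this identification, $G^{(J)}=\langle\rho_j\mid j\in J\rangle$, and the left action of $G$ is the automorphism action. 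Since $\C$ is residually-connected, this coset chamber system is residually-connected, and it is connected, so $G=\langle\rho_i\mid i\in I\rangle$ by Lemma~\ref{lem:connected-coset-system}.

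Next, apply the results of this section with $H=N$. The three-factor avoidance condition \eqref{eq:trivial_intersection_N} is assumed, and $N\trianglelefteq G$.
\begin{enumerate}
    \item Theorem~\ref{thm:quotient_RC} gives that $\C/N$ is residually-connected.
    \item Corollary~\ref{cor:quot_transfer_thin_thick_linearity}(b) gives that $\C/N$ is thin, so $\C/N$ is a hypertope.
    \item Since $n=|I|\geq 3$, Corollary~\ref{cor:quot_transfer_thin_thick_linearity}(c) shows that every rank two cell of $\C/N$ is isomorphic to one of $\C$. Hence $\C/N$ belongs to the same diagram $\D$.
\end{enumerate}

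Finally, we establish regularity. Since $N$ is normal, every $g\in G$ induces a well-defined map $Ng\,\cdot\mapsto gNg\,\cdot=Ngg'\cdot$ on $N$-orbits of chambers, and this map preserves the adjacency \eqref{eq:quotient-double-coset-adjacency-simp}. So $G/N$ acts by automorphisms on $\C/N$, and the action is chamber-transitive because $G$ is transitive on $\C$. Alternatively, Proposition~\ref{prop:AutomorphismGroupQuotient} identifies $\Aut(\C/N)$ with $N_G(N)/N=G/N$, which applies since both $\C$ and $\C/N$ are thin. Either way, $\C/N$ is regular.

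The only step needing care is the initial identification of $\C$ with a coset chamber system in which the standard minimal parabolic subgroups are $\langle\rho_i\rangle$ and the Borel subgroup is trivial. The rest is a direct invocation of Theorem~\ref{thm:quotient_RC} and Corollary~\ref{cor:quot_transfer_thin_thick_linearity}.
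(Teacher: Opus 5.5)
Your proof is correct and follows the paper's argument: Theorem~\ref{thm:quotient_RC} and Corollary~\ref{cor:quot_transfer_thin_thick_linearity} give residual-connectedness, thinness and the diagram, and $G/N$ (via Proposition~\ref{prop:AutomorphismGroupQuotient}, or your direct check) gives chamber-transitivity. Your explicit identification of $\C$ with $\CC\bigl(G,\{1\},(\langle\rho_i\rangle)_{i\in I}\bigr)$, which the paper leaves implicit, is exactly what makes those coset-chamber-system results applicable; note also that the map in your regularity paragraph should read $Ng'c\mapsto gNg'c=Ngg'c$.
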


\begin{proof}
    Theorem \ref{thm:quotient_RC} and Corollary \ref{cor:quot_transfer_thin_thick_linearity} show that $\C/N$ is a residually-connected thin chamber system. Since $N$ is normal, Proposition \ref{prop:AutomorphismGroupQuotient} shows that $\Aut(\C/N)$ is isomorphic to $G/N$ acting transitively on the coset of $N$ in $G$. Hence, $\C/N$ is chamber transitive, and thus also a regular hypertope. Finally, the fact that the diagrams are equal is shown in Corollary \ref{cor:quot_transfer_thin_thick_linearity}.
\end{proof}

\section{The Brunnian subgroup of a right-angled Coxeter group}\label{sec:brunnian}

Let $I$ be finite with $|I|\geq2$, and let $(W,S)$ be a right-angled Coxeter
system, where $S=\{r_i\mid i\in I\}$.  Thus
\[
    W=\left\langle r_i\ (i\in I)\ \middle|\
        r_i^2=1,\ (r_ir_j)^2=1\text{ whenever }m_{ij}=2
      \right\rangle,
\]
with $m_{ij}\in\{2,\infty\}$ for $i\neq j$.

For every $J\subseteq I$, define the map $\pi_J\colon W\to W$ on the Coxeter
generators by
\begin{equation}
\label{eq:deletion-retraction}
    \pi_J(r_i)=
    \begin{cases}
        r_i,&i\in J,\\
        1,&i\notin J.
    \end{cases}
\end{equation}
This map is easily checked to extend to a homomorphism.
Moreover,
\begin{equation}
\label{eq:retraction-identities}
    \pi_J\circ\pi_K=\pi_{J\cap K}
    \qquad\text{and}\qquad
    \pi_J(W_K)=W_{J\cap K}.
\end{equation}

\begin{definition}
The \emph{Brunnian subgroup} of the right-angled Coxeter system $(W,S)$ is
\begin{equation*}
    \Br(W,S)
      :=\bigcap_{i\in I}\ker\bigl(\pi_{I\setminus\{i\}}\bigr).
\end{equation*}
\end{definition}

This subgroup is called the Brunnian subgroup of $(W,S)$ in analogy with Brunnian links and braids. Indeed, these links and braids have the defining property that deleting any one of their strands renders the  whole structure trivial. A famous example of this is the Borromean rings. 
Similarly, the elements if $\Br(W,S)$ are exactly those that become trivial whenever at least one $r_i$ is deleted.
Additionally, as $\Br(W,S)$ is an intersection of normal subgroups, $\Br(W,S)$ is also normal in $W$.

\begin{lemma}
\label{lem:pi(Br)trivial}
If $J\subsetneq I$, then
\[
    \pi_J\bigl(\Br(W,S)\bigr)=1.
\]
Equivalently, $\Br(W,S)\leq\ker(\pi_J)$.
\end{lemma}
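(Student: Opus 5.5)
Since $J\subsetneq I$, we can pick an index $i\in I\setminus J$, so that $J\subseteq I\setminus\{i\}$ and hence $J\cap(I\setminus\{i\})=J$. The plan is to factor $\pi_J$ through the deletion map $\pi_{I\setminus\{i\}}$, whose kernel contains $\Br(W,S)$ by definition.

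First, the composition identity in \eqref{eq:retraction-identities} with $K=I\setminus\{i\}$ gives
\[
    \pi_J\circ\pi_{I\setminus\{i\}}=\pi_{J\cap(I\setminus\{i\})}=\pi_J .
\]
If one prefers not to rely on that identity, it can be checked directly: both sides are homomorphisms $W\to W$, so it suffices to compare them on the generators $r_k$. If $k\in J$, then $k\neq i$, and both sides send $r_k$ to $r_k$. If $k\notin J$, both sides send $r_k$ to $1$.

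Next, take $w\in\Br(W,S)$. Then $w\in\ker(\pi_{I\setminus\{i\}})$, so
\[
    \pi_J(w)=\pi_J\bigl(\pi_{I\setminus\{i\}}(w)\bigr)=\pi_J(1)=1 .
\]
This shows $\pi_J(\Br(W,S))=1$, which is the same statement as $\Br(W,S)\leq\ker(\pi_J)$.

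There is no real obstacle here. The only substantive input is that each $\pi_J$ is a well-defined homomorphism, which the paper has already noted: the relations $r_k^2=1$ and $(r_kr_l)^2=1$ map to valid relations when some of the generators are sent to $1$. The properness of $J$ is used exactly once, to guarantee that an index $i\in I\setminus J$ exists.
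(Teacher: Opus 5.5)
Your proof is correct and matches the paper's argument: you choose $i\in I\setminus J$, use the factorization $\pi_J=\pi_J\circ\pi_{I\setminus\{i\}}$ from the retraction identities, and conclude from $\Br(W,S)\leq\ker(\pi_{I\setminus\{i\}})$. Your direct check of that factorization on the generators is a small addition that the paper leaves implicit.
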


\begin{proof}
Choose $i\in I\setminus J$. As $W_J\leq W_{I\setminus\{i\}}$, by
Equation~\eqref{eq:retraction-identities} we have
\[
    \pi_J=\pi_J\circ\pi_{I\setminus\{i\}}.
\]
The map $\pi_{I\setminus\{i\}}$ is trivial on $\Br(W,S)$ by definition, thus the
same is true of $\pi_J$.
\end{proof}

\begin{prop}
\label{prop:Brun_intersect}
Let $J,K,L\subsetneq I$.  Then
\begin{enumerate}
    \item $\Br(W,S)\cap W_JW_K=\{1\}$;
    \item $\Br(W,S)\cap W_JW_KW_L=\{1\}$.
\end{enumerate}
\end{prop}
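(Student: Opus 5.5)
The approach is to use only the retractions $\pi_J$ and Lemma~\ref{lem:pi(Br)trivial}, together with the normality of $\Br(W,S)$ in $W$. Each application of a retraction lets us absorb part of one factor into a neighbouring factor, shortening the product until it is trivial. Write $\Br=\Br(W,S)$ throughout.

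For (a), take $g=ab\in\Br$ with $a\in W_J$ and $b\in W_K$. Since $J\subsetneq I$, Lemma~\ref{lem:pi(Br)trivial} gives $\pi_J(g)=1$. By \eqref{eq:retraction-identities}, $\pi_J(a)=a$ and $\pi_J(b)\in W_{J\cap K}$. Hence $a=\pi_J(b)^{-1}\in W_{J\cap K}\leq W_K$, so $g=ab\in W_K$. Since $K\subsetneq I$ as well, we get $g=\pi_K(g)=1$.

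For (b), take $g=abc\in\Br$ with $a\in W_J$, $b\in W_K$ and $c\in W_L$. Applying $\pi_J$ as before gives
\[
a=\bigl(\pi_J(b)\pi_J(c)\bigr)^{-1}=\pi_J(c)^{-1}\pi_J(b)^{-1},
\]
with $\pi_J(c)^{-1}\in W_{J\cap L}$ and $\pi_J(b)^{-1}\in W_{J\cap K}$. Set $a'=\pi_J(c)^{-1}\in W_{J\cap L}\leq W_L$ and $b'=\pi_J(b)^{-1}b\in W_K$. Then $g=a'b'c$.

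Next we use normality of $\Br$ in $W$ and conjugate by $c$:
\[
cgc^{-1}=ca'b'\in\Br .
\]
Here $ca'\in W_L$ and $b'\in W_K$, so $cgc^{-1}\in\Br\cap W_LW_K$. Part (a), applied with the proper subsets $L$ and $K$, gives $cgc^{-1}=1$, and therefore $g=1$.

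The only delicate point is to avoid trying to push a single retraction through a three-fold product, which does not close up on its own. The conjugation trick is what rotates the product $a'b'c$ into a two-factor product, so that (a) can be reused.
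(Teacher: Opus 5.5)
Your proof is correct, and your part (a) is exactly the paper's argument. For (b) you take a genuinely different route.

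The paper applies all three retractions $\pi_J,\pi_K,\pi_L$ to $x=jkl$. This gives $j\in W_{J\cap L}W_{J\cap K}$, $k\in W_{J\cap K}W_{K\cap L}$ and $l\in W_{K\cap L}W_{J\cap L}$. It then regroups the six resulting pieces as $x=a(bb')(dd')a'\in W_JW_L$, and part (a) finishes; no conjugation is needed.

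You apply only $\pi_J$, which already places $g$ in $W_LW_KW_L$. You then conjugate by the last factor $c$ to collapse this to $W_LW_K$.

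Your version is shorter and avoids the bookkeeping of three simultaneous factorizations. The paper's version is conjugation-free and exhibits an explicit factorization of $x$ itself through the pairwise-intersection parabolics.

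The normality you invoke costs nothing. You only need $cgc^{-1}\in\ker\pi_K\cap\ker\pi_L$, and this holds because kernels are normal. So your argument, like the paper's, proves the slightly stronger pointwise statement: every $x\in W_JW_KW_L$ with $\pi_J(x)=\pi_K(x)=\pi_L(x)=1$ is trivial.
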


\begin{proof}
\emph{(a)} Let $x\in\Br(W,S)\cap W_JW_K$, and write
$x=jk$ with $j\in W_J$ and $k\in W_K$.  Applying $\pi_J$ and using
Lemma~\ref{lem:pi(Br)trivial} gives
    $1=\pi_J(x)=j\pi_J(k)$,
so
\[
    j=\pi_J(k)^{-1}\in\pi_J(W_K)=W_{J\cap K}\leq W_K.
\]
Thus both $j$ and $k$ belong to $W_K$.  Since $K\subsetneq I$,
Lemma~\ref{lem:pi(Br)trivial} also gives
\[
    1=\pi_K(x)=jk=x.
\]

\emph{(b)} Let $x\in\Br(W,S)\cap W_JW_KW_L$, and write
$x=jkl$ with $j\in W_J$, $k\in W_K$, and $l\in W_L$.  Applying the three
retractions and using Lemma~\ref{lem:pi(Br)trivial}, we obtain
\begin{align*}
    j&=\pi_J(l)^{-1}\pi_J(k)^{-1}
       \in W_{J\cap L}W_{J\cap K},\\
    k&=\pi_K(j)^{-1}\pi_K(l)^{-1}
       \in W_{J\cap K}W_{K\cap L},\\
    l&=\pi_L(k)^{-1}\pi_L(j)^{-1}
       \in W_{K\cap L}W_{J\cap L}.
\end{align*}
Choose factorizations
\[
    j=ab,\qquad k=b'd,\qquad l=d'a',
\]
where
\[
    a,a'\in W_{J\cap L},\qquad
    b,b'\in W_{J\cap K},\qquad
    d,d'\in W_{K\cap L}.
\]
Then
\[
    x=a(bb')(dd')a'\in W_{J\cap L} W_{J\cap K} W_{K\cap L} W_{J\cap L}\subseteq W_JW_L.
\]
Part~(a), applied to the proper subsets $J$ and $L$, now gives $x=1$.
\end{proof}

We have thus shown that $\Br(W,S)$ satisfies the three-factor avoidance condition \eqref{eq:trivial_intersection_N}. We can then use subgroups of $\Br(W,S)$ to construct controlled quotients.
\begin{thm}
\label{thm:Brunnian-hypertope}
Let $(W,S)$ be a right-angled Coxeter system of finite rank at least two, and
consider its Coxeter chamber system
\[
    \CC=\CC\bigl(W,(\langle r_i\rangle)_{i\in I}\bigr).
\]
Let $H\leq\Br(W,S)$.  Then $\CC/H$ is a hypertope.  If $|I|\geq 3$, it has the same Coxeter diagram as $\CC$.
\end{thm}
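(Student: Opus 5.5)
The plan is to combine the general quotient machinery of Section~\ref{sec:quotientChamberSystems} with the avoidance property of the Brunnian subgroup just established. View $\CC=\CC(W,S)$ as the coset chamber system $\CC\bigl(W,\langle 1\rangle,(\langle r_i\rangle)_{i\in I}\bigr)$. Here $B=\{1\}$ and $G^{(J)}=W_J$ for every $J\subseteq I$. Put $N:=\Br(W,S)$. We already know that $N$ is normal in $W$, since it is an intersection of kernels of homomorphisms.

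First we verify that $N$ satisfies the three-factor avoidance condition \eqref{eq:trivial_intersection_N}. This is exactly Proposition~\ref{prop:Brun_intersect}(b), which gives $N\cap W_JW_KW_L=\{1\}$ for all $J,K,L\subsetneq I$. Empty subsets and repetitions are covered, since each of these is still a proper subset of $I$.

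Next we recall that $\CC(W,S)$ is thin, connected and residually-connected; this is stated in the background on Coxeter chamber systems. As an alternative check, Proposition~\ref{prop:CoxeterProperties}(c) together with Proposition~\ref{prop:RC_CosetChamberSystem} gives residual-connectedness directly.

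Now let $H\leq N$. Theorem~\ref{thm:quotient_RC} then says that $\CC/H$ is residually-connected. Corollary~\ref{cor:quot_transfer_thin_thick_linearity}(b) says that $\CC/H$ is thin. This part requires $|I|\geq2$, which holds by hypothesis, so that each singleton $\{i\}$ is a proper subset of $I$. Being thin and residually-connected, $\CC/H$ is a hypertope by definition.

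For the diagram statement we assume $|I|\geq3$. Corollary~\ref{cor:quot_transfer_thin_thick_linearity}(c) then shows that every $\{i,j\}$-cell of $\CC/H$ is isomorphic to an $\{i,j\}$-cell of $\CC$. The latter is the chamber system of the dihedral group $W_{\{i,j\}}$, so the label $m_{ij}$ is preserved and $\CC/H$ belongs to $\DD(W,S)$.

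There is no real obstacle, since the argument only assembles earlier results. The one point needing care is matching the hypotheses of Theorem~\ref{thm:quotient_RC}, which are stated for proper subsets $J,K,L$. The cases involving $I$ itself are handled inside that proof, so Proposition~\ref{prop:Brun_intersect} supplies exactly what is needed.
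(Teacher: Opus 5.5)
Your proposal is correct and follows the paper's proof essentially step for step. Normality of $\Br(W,S)$ together with Proposition~\ref{prop:Brun_intersect}(b) gives the three-factor avoidance condition, Theorem~\ref{thm:quotient_RC} then yields residual-connectedness, and Corollary~\ref{cor:quot_transfer_thin_thick_linearity}(b) and (c) give thinness and the preserved diagram.
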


\begin{proof}
The subgroup $\Br(W,S)$ is normal in $W$, and
Proposition~\ref{prop:Brun_intersect}(b) shows that it satisfies the
three-factor avoidance condition \eqref{eq:trivial_intersection_N}, with
$G^{(J)}=W_J$.

Since the Coxeter chamber system $\CC$ is connected, thin, and residually-connected, by 
Theorem~\ref{thm:quotient_RC} we have that $\CC/H$ is residually-connected,
and Corollary~\ref{cor:quot_transfer_thin_thick_linearity} implies that it is
thin. Hence, it is a hypertope. When $|I|\geq 3$, the diagram follows from  Corollary~\ref{cor:quot_transfer_thin_thick_linearity}(c). 
\end{proof}

\section{Finding an appropriate subgroup}
\label{sec:AppropriateSub}

Let $(W,S)$ be an irreducible right-angled Coxeter group with $ S= \{r_i\mid i\in I\}$ and Coxeter diagram $\Delta:= \D(W,S)$, and let $\Br:=\Br(W,S)$ be its Brunnian subgroup as defined in the previous section. So far, we have proved that, given a subgroup $H$ of $Br$, the quotient of a right-angled Coxeter chamber system $\C/H$ will always be a hypertope.

The main goal of this section is to prove that we can actually control the automorphism group of the quotient chamber system in order to produce a hypertope of type $2_J$. In particular, our goal will be to prove the following theorem.

\begin{thm}\label{thm:prescribed-normalizer}
Let $J\subsetneq I$.  There exists a nontrivial proper subgroup $H<\Br$ such that $N_W(H)=C_J$.
\end{thm}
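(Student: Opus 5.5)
Throughout we assume, as in the main theorem, that $\Delta$ is connected with $|I|\geq 3$. The strategy is to build $H$ as the normal closure, inside $C_J$, of a suitable loxodromic element of $\Br$, taken to a high power so that small cancellation controls the normalizer. Since $\Br$ is normal in $W$ and $C_J$ has index two, any $H\trianglelefteq C_J$ with $H\le\Br$ automatically satisfies $C_J\le N_W(H)$. The normalizer $N_W(H)$ contains $C_J$, so it equals either $C_J$ or $W$. It therefore suffices to pick one element $t\in W\setminus C_J$ and ensure that $tHt^{-1}\neq H$, i.e.\ that $H$ is normal in $C_J$ but not in $W$.

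The first step is to put an acylindrical non-elementary action in place. Irreducible right-angled Coxeter groups that are not virtually cyclic act acylindrically and non-elementarily on a hyperbolic space, for instance the contact graph of the Davis complex, or via the graph-product results for irreducible $\Gamma$ with $|I|\geq 3$. Hence $W$ and its finite-index subgroup $C_J$ are acylindrically hyperbolic. Next, $\Br$ is normal and infinite. It is nontrivial: take an iterated commutator involving every generator, such as $[[r_1,r_2],\dots]$ along a spanning tree of $\Delta$, and check that it is nontrivial using normal forms. An infinite normal subgroup is $s$-normal, so by Lemma~\ref{lem:snormal_non_elem} the group $\Br$, and likewise $\Br\cap C_J$, acts non-elementarily. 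This yields loxodromic WPD elements inside $\Br\cap C_J$.

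The second step, which I expect to be the main obstacle, is choosing the element. Fix $t\in W\setminus C_J$, for example some $r_i$ with $i\notin J$. I want a loxodromic $h\in\Br\cap C_J$ such that $h$ and $tht^{-1}$ are non-commensurable in $C_J$. Generically this should hold. If $g^{-1}h^mg=(tht^{-1})^n$ for some $g\in C_J$, then $tg^{-1}\in W\setminus C_J$ would commensurate $h$, placing it in $E_W(h)$. So it suffices to find a loxodromic $h\in\Br\cap C_J$ with $E_W(h)\le C_J$. To obtain one, start from independent loxodromics $a,b\in\Br\cap C_J$ and use a ping-pong or "generic element" construction, such as $h=a^pb^q$ for large $p,q$. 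The aim is to make $E_W(h)$ as small as possible; one could try forcing $E_W(h)=\langle h\rangle\times K$ with $K$ finite, and then show $E_W(h)\subseteq C_J$. A concrete fallback is to use the normal form of $h$ in the RACG, for instance a cyclically reduced word whose support is all of $I$ and which has no nontrivial root or reversal symmetry, and then compute its centralizer and commensurator directly.

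The third step applies Theorem~\ref{thm:DGO_thms} to $C_J$ with the pair $h_1=h$ and $h_2=tht^{-1}$. Taking $N_1=\langle h^k\rangle$ for $k$ large, normal in $E(h_1)$ after passing to a characteristic finite-index power, and $N_2=1$, Dehn filling provides a quotient in which the image of $E(h_2)$ embeds. Let $H=\langle\langle h^k\rangle\rangle_{C_J}\le\Br$. If $tHt^{-1}=H$, then $th^kt^{-1}\in H$ would die in $C_J/H$, contradicting the injectivity of $E(h_2)$ into the quotient. Therefore $N_W(H)=C_J$. Finally, $H$ is proper in $\Br$ because $tHt^{-1}\neq H$, while $t\Br t^{-1}=\Br$.
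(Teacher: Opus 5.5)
\textbf{Gap: Step~2 is not proved.} Your architecture matches the paper's: the normal closure in $C_J$ of a power of a loxodromic element of $\Br$, made non-normal in $W$ by applying Theorem~\ref{thm:DGO_thms} to $h$ and $tht^{-1}$. Your reduction is also correct: $h$ and $tht^{-1}$ are non-commensurable in $C_J$ exactly when $E_W(h)\le C_J$. But Step~2 is where the real work lies, and you leave it at ``generically this should hold''. None of your suggestions closes it:
\begin{itemize}
\item Nothing in your sketch prevents $E_W(a^pb^q)$ from containing an element outside $C_J$, such as a square root of $h$, an element inverting a power of $h$, or a finite subgroup commuting with $h$.
\item Even if you reach $E_W(h)=\langle h\rangle\times K$ with $K$ finite, $K$ need not lie in $C_J$. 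Finite subgroups of $W$ are conjugate into spherical special subgroups, so they may contain conjugates of some $r_i$ with $i\notin J$, and these lie outside $C_J$.
\item The normal-form route is not carried out.
\end{itemize}

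The missing idea is that $\Br$ is a \emph{suitable} subgroup of $W$. It is non-elementary, by your $s$-normality argument. Its finite part $K_W(\Br)$, the maximal finite subgroup of $W$ normalized by $\Br$, is trivial: since $\Br\trianglelefteq W$, maximality forces $K_W(\Br)\trianglelefteq W$, and $W$ has no nontrivial finite normal subgroups. Corollary~5.7 of \cite{hull2017small} then gives a loxodromic WPD element $a\in\Br$ with $E_W(a)=\langle a\rangle$. This lies in $C_J$ because $\Br\le C_J$ automatically. For $x\in\Br$ and $i\in I$, deleting some $r_j$ with $j\neq i$ kills $x$ without changing the parity of the occurrences of $r_i$, so every exponent parity of $x$ vanishes; in particular your $\Br\cap C_J$ is just $\Br$.

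\textbf{Minor gap.} You assert that $\Br$ is infinite but only argue nontriviality, and infiniteness is what your $s$-normality step needs. Either the absence of finite normal subgroups in $W$ or the torsion-freeness of $[W,W]\ge\Br$ closes this.

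\textbf{Step~3.} With that input, Step~3 goes through. It differs from the paper only in the endgame. You fill just $h^k$, taking $N_2=\{1\}$, which trivially avoids $F_2$. You then read off $th^kt^{-1}\notin H$ from the injectivity of $E(tht^{-1})$ into $C_J/H$. The paper instead fills $a^{2k}$ and $b^{3k}$ and compares the orders $2k$ and $3k$; both arguments are valid. Applying Theorem~\ref{thm:DGO_thms} directly to the restricted $C_J$-action, as you do, is also the right way to get the hyperbolic embedding. The pair cannot be hyperbolically embedded in $W$ itself, since $h$ and $tht^{-1}$ are conjugate there.
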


We will exploit the fact that $W$ is an acylindrically hyperbolic group as long as $|I| \geq 3$. For definitions and notations about acylindrical actions and acylindrically hyperbolic groups, please refer to Section \ref{sec:AcylindricallyHyperbolic}. For $H < K < G$, we will denote by $\langle\langle H \rangle\rangle_K$ the normal closure of $H$ in $K$. That is, it is the subgroup of $G$ generated by all conjugates of $H$ by elements of $K$.

\begin{lemma}
    If $|I| \geq 3$, then an irreducible right-angled Coxeter group $W$ over $I$ is acylindrically hyperbolic.
\end{lemma}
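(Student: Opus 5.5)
The plan is to deduce the lemma from a known acylindrical hyperbolicity criterion for graph products, rather than building an action by hand. As recalled in Section~\ref{sec:background}, $W$ is the graph product of copies of $\Cyc_2$ over the commutation graph $\Gamma=\Delta^c$. Irreducibility of $(W,S)$ means exactly that $\Gamma^c=\Delta$ is connected, i.e.\ $\Gamma$ does not split as a join. I would invoke the theorem of Minasyan and Osin on groups acting on trees. It says that a graph product of nontrivial groups over a finite graph with at least two vertices whose complement is connected is acylindrically hyperbolic, unless it is virtually cyclic. The acylindrical action there is on the Bass--Serre tree of a splitting of $W$ as an amalgamated product along a link. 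So the key steps are: (1) check that $W$ meets the hypotheses of that criterion; (2) show that $W$ is not virtually cyclic.

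Step (1) is immediate from the translation between $\Delta$ and $\Gamma$ given in Section~\ref{sec:background}. The main work is step (2), and this is where $|I|\geq3$ enters. For $|I|=2$, the irreducible group is $\Cyc_2*\Cyc_2$, the infinite dihedral group, which is virtually cyclic, so the hypothesis is genuinely needed.

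For step (2), connectedness of $\Delta$ on at least three vertices gives a path $i - j - k$ in $\Delta$ with $m_{ij}=m_{jk}=\infty$. By Proposition~\ref{prop:CoxeterProperties}(a), the standard parabolic subgroup $W_{\{i,j,k\}}$ is itself the right-angled Coxeter group on these three generators, and there are two cases.
\begin{itemize}
\item If $m_{ik}=\infty$, then $W_{\{i,j,k\}}\cong\Cyc_2*\Cyc_2*\Cyc_2$.
\item If $m_{ik}=2$, then $W_{\{i,j,k\}}\cong(\Cyc_2\times\Cyc_2)*\Cyc_2$.
\end{itemize}
In either case this is a free product of two finite groups, not both of order two. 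Such a group contains a nonabelian free subgroup; for instance, the kernel of the map to the product of the factors is free of rank at least $2$. A group containing $F_2$ cannot be virtually cyclic, since every subgroup of a virtually cyclic group is virtually cyclic. Hence $W$ is not virtually cyclic.

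If one prefers to avoid quoting the graph-product criterion, there is an alternative route. Let $W$ act properly and cocompactly on its Davis complex, a CAT(0) cube complex. Since $W$ is irreducible and not virtually cyclic, the Caprace--Sageev rank rigidity theorem yields a rank-one isometry. Sisto's result then shows that rank-one elements are WPD loxodromics for an action on a hyperbolic space. Osin's theorem that a non-virtually-cyclic group with a loxodromic WPD element is acylindrically hyperbolic then concludes the proof.

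I expect the main obstacle to be bookkeeping rather than substance. One must match the exact hypotheses of the cited criterion: the correct edge convention ($\Gamma$ versus $\Delta$), and the exclusion of the virtually cyclic case. Some care is also needed, for later use in Section~\ref{sec:AppropriateSub}, to record which action is produced, since the paper stresses that acylindrical hyperbolicity comes with a specified action.
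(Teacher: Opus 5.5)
Your proposal follows the paper's proof: view $W$ as a graph product of copies of $\Cyc_2$ over $\Delta^c$, apply Minasyan--Osin's dichotomy for irreducible graph products (virtually cyclic or acylindrically hyperbolic), and exclude the virtually cyclic case via a nonabelian free subgroup, which you justify more explicitly than the paper does through the parabolic subgroup $W_{\{i,j,k\}}$ on a path $i-j-k$ of $\Delta$. There are two harmless slips: $\Cyc_2*\Cyc_2*\Cyc_2$ is a free product of three finite groups, not two, though your free-kernel argument still works; and Minasyan--Osin only obtain a loxodromic WPD element for the Bass--Serre tree action, which need not itself be acylindrical, so the acylindrical action comes from Osin's criterion rather than from the tree.
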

\begin{proof}
This is a direct consequence of Corollary 2.13 of Minasyan-Osin \cite{minasyan2013acylindrically}. Indeed, the corollary states that an irreducible graph product of non-trivial groups is either virtually cyclic or acylindrically hyperbolic. Any right-angled Coxeter group can be seen as a graph product of groups where each vertex group is cyclic of order two. Beware that the usual convention for graph products is to put an edge to indicate commutation relations. This is the opposite of the convention for Coxeter diagram, where we omit edges to indicate commutation. A graph product is then irreducible when the graph complement is connected. This coincides with our definition of $W$ being irreducible, which is the case by assumption for us. Finally, as soon as $I$ contains at least $3$ elements, the Coxeter group $W$ contains a non-abelian free subgroup, and thus cannot be virtually cyclic. Therefore, $W$ is acylindrically hyperbolic.
\end{proof}

For the remaining of the paper, we will fix a non-elementary acylindrical action of $W$.
We can then utilise all the theory of acylindrically hyperbolic groups to prove Theorem \ref{thm:prescribed-normalizer}. Here is a loose description of the procedure we will follow. Let $N_J = \{H < \Br \mid H \; \text{is normal in } C_J\}$ be the collection of subgroups of $\Br$ that are normal in $C_J$. Then, for any $H \in N_J$, we have $C_J \leq N_W(H)$. But $C_J$ is an index two subgroup of $G$, so $N_W(H)$ must be either $C_J$ or $W$. In order to guarantee that $N_W(H)$ is not the whole group $W$, it is then sufficient to show that $r_i H r_i \neq H$ for a choice of $i \notin J$, so that $r_i \notin C_J$. Suppose now that we have a well-chosen infinite order element $a \in \Br$. Consider $b = a^{r_i} := r_i a r_i$. Form the group $ H =\langle\langle a,b\rangle \rangle_{C_J}$, the normal closure in $C_J$ of the group generated by $a$ and $b$. Then , $H$ belongs to $N_J$. Clearly, we have that $r_i$ normalizes $H$ since $r_i$ conjugates $a$ to $b$ and $b$ to $a$. However it would be reasonable to expect that $a$ can be chosen so that $r_i$ will not normalize the group $\langle \langle a^{m_1}, b^{m_2}\rangle \rangle$ for high enough exponents $m_1$ and $m_2$. This crucial last step will be formalized by using the Dehn filling results of Dahmani-Guirardel-Osin~\cite[Theorem 7.19]{DGO}, presented in Theorem~\ref{thm:DGO_thms}.

We start by checking that the subgroup $\Br$ is sufficiently large.

\begin{lemma}\label{lem:Binfinite}
    If $\Delta$ is connected and $|I| \geq 3$, then the group $\Br$ is an infinite normal subgroup of $W$.
\end{lemma}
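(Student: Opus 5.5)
Normality of $\Br$ was already observed after its definition, since it is an intersection of kernels of the homomorphisms $\pi_{I\setminus\{i\}}$. It remains to prove that $\Br$ is infinite. The plan is to exhibit an explicit nontrivial element of $\Br$ of infinite order. Because $W$ is right-angled, a nontrivial element of $W$ of finite order is conjugate into a spherical parabolic subgroup $W_K$, and $W_K$ is generated by pairwise commuting $r_k$. So it is enough to find a nontrivial $x \in \Br$ that lies in no conjugate of such an elementary abelian $2$-group. Alternatively, one can show $\Br$ has nontrivial elements and is torsion-free: any torsion element lies in a conjugate $gW_Kg^{-1}$ with $K$ spherical, hence proper since $\Delta$ is connected with $|I|\geq 3$, and $\Br\cap gW_Kg^{-1} = g(\Br\cap W_K)g^{-1} = \{1\}$ by normality and Proposition~\ref{prop:Brun_intersect}. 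This second route is cleaner, so the proof reduces to showing $\Br\neq\{1\}$.

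To build a nontrivial element, use iterated commutators. Order $I=\{i_1,\dots,i_n\}$ and define
\[
x_1 = r_{i_1},\qquad x_{k+1} = [x_k, r_{i_{k+1}}] \quad (1\le k<n),
\]
and set $x = x_n$. For each $i\in I$, the element $\pi_{I\setminus\{i\}}(x)$ is trivial. Indeed, deleting $r_{i_k}$ makes $x_k$ trivial when $k\geq 2$, because $[y,1]=1$, and all later commutators are then trivial as well. For $k=1$, deleting $r_{i_1}$ makes $x_1=1$ and again every subsequent commutator is trivial. Hence $x\in\Br$.

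The main obstacle is proving $x\neq 1$ in $W$. The commutators must be arranged so that no step collapses through commutation relations. To control this, choose the ordering so that each $i_{k+1}$ is adjacent in $\Delta$ to some earlier $i_j$; this is possible since $\Delta$ is connected, for instance by a BFS order. I then want to argue, using the normal form theorem for right-angled Coxeter groups (Tits' solution of the word problem via deletions of $r_ir_i$ pairs and commutations), that $x_{k+1}$ is a nontrivial element of $W_{\{i_1,\dots,i_{k+1}\}}$. Set $y = x_k \in W_{\{i_1,\ldots,i_k\}}$ and $r=r_{i_{k+1}}$, so that $x_{k+1}=[y,r] = y^{-1} r y r$. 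This is trivial exactly when $r$ commutes with $y$.

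The centralizer of $r$ in a right-angled Coxeter group is $\langle r\rangle\times W_{\mathrm{lk}}$, where $\mathrm{lk}$ is the set of generators commuting with $r$. So it suffices to ensure that $y$ involves, in its reduced form, a generator not commuting with $r$. Every reduced word for $y$ uses exactly the same set of letters, its support. I will prove by induction that $\mathrm{supp}(x_k)=\{i_1,\dots,i_k\}$. Once $x_{k+1}\neq 1$, it has support $\{i_1,\ldots,i_{k+1}\}$, because applying $\pi_{\mathrm{supp}(x_k)}$ kills it. Since $i_{k+1}$ is adjacent to some earlier $i_j\in\mathrm{supp}(y)$, $y$ does not commute with $r$, so $x_{k+1}\neq1$ and the induction closes.

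I expect the delicate point to be the claim that support is well defined and that centralizers of generators have the stated form. Both are standard consequences of Tits' normal form. If needed, the centralizer claim can be replaced by the direct observation that $y\notin W_{\mathrm{lk}(i_{k+1})}\times\langle r\rangle$ because of the non-commuting letter in its support.
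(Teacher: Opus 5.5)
Your proof is correct, but it takes a different route from the paper on both halves.

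\textbf{Infinitude.} The paper invokes Paris's result that normalizers of nontrivial finite subgroups of $W$ have infinite index. From this it deduces that $W$ has no nontrivial finite normal subgroups, so a nontrivial normal $\Br$ must be infinite. You instead prove that $\Br$ is torsion-free, combining the standard fact that finite-order elements of a Coxeter group lie in conjugates of spherical $W_K$ with normality and Proposition~\ref{prop:Brun_intersect}. Your route relies more on the paper's own three-factor machinery, and it gives the stronger conclusion that every nontrivial element of $\Br$ has infinite order. The paper's route instead yields the fact that $W$ has no nontrivial finite normal subgroups, which it reuses in Lemma~\ref{lem:existanceLoxodromic} to show that $\Br$ is suitable.

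\textbf{Nontriviality.} The paper takes a right-nested commutator along a spanning walk of $\Delta$. It then asserts that the resulting word is in normal form because consecutive letters do not commute, without checking whether cancellations occur once the commutators are expanded. Your left-nested commutator along a connected ordering, with the induction on supports and the centralizer $C_W(r)=\langle r\rangle\times W_{\mathrm{lk}}$, gives a fully justified argument.

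\textbf{One tidy-up.} The phrase ``applying $\pi_{\mathrm{supp}(x_k)}$ kills it'' only shows $i_{k+1}\in\mathrm{supp}(x_{k+1})$. For the remaining indices, use what your membership argument already shows: $\pi_{I\setminus\{i_j\}}(x_{k+1})=1$ for every $j\le k+1$. If some $i_j$ were missing from the support, $x_{k+1}$ would be fixed by this map, hence trivial.
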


\begin{proof}
    The group $\Br$ is normal as it is an intersection of kernels. Suppose for now that $\Br$ is not trivial. Proposition 4.3 of \cite{ParisCoxeter} states that the normalizer $N_W(H)$ of any non-trivial finite subgroup of $W$ must have infinite index in $W$. This directly implies that $W$ cannot have finite non-trivial \emph{normal} subgroups, since their normalizer would be the whole group $W$. Hence, if $\Br$ is non-trivial, it must be infinite.
    
    It remains to show that $\Br$ is not the trivial group. We do so by explicitly constructing an element in $\Br$. Let $ p =p_1-p_2-\cdots-p_{k-1}-p_k$ be a path in the Coxeter diagram $\Delta$ that goes through every vertex, possibly using backtracking. To each $p_i$, let $r_{p_i}$ be the corresponding involution. We construct the nested commutator
    \[
    x := [r_{p_1},[r_{p_2},[\ldots [r_{p_{k-1}},r_{p_k}]]]\ldots]
    \]
    We first show that $x \in \Br$. First of all, every $r_i$ appears at least once since the path $p$ goes through every vertices of $\Delta$. Therefore, deleting any $r_i$ from $x$ creates at least one commutator $[1,y] = 1 = [y,1]$, which then propagates. Hence $x$ becomes trivial after any deletion of a generator, however $x$ is itself non-trivial. Indeed, since we are taking paths along edges labelled $\infty$, no two consecutive generators $r_{p_i}$ and $r_{p_{i+1}}$ will commute. Hence, the expression for $x$ is in the standard normal form for a non-trivial word of graph products.
    
\end{proof}

We now need to find well-behaved loxodromic elements inside of $\Br$. To do so we will use Corollary 5.7 of~\cite{hull2017small}. Note that this corollary considers only actions of $G$ on Cayley graphs $\Gamma(G,\mathcal{A})$. This is not a problem, since by Theorem 1.2 of \cite{OsinAcylindrical}, we know that such an action must exist for any acylindrically hyperbolic group $G$. In the proof of the following lemma, we do not use any properties specific to the choice of action. 

\begin{lemma}\label{lem:existanceLoxodromic}
    There exists a loxodromic $WPD$ element $a \in \Br$ such that $E_W(a) = \langle a \rangle$. 
\end{lemma}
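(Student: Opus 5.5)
We plan to obtain $a$ from Corollary~5.7 of \cite{hull2017small}. As the paper indicates, we work with a non-elementary acylindrical action of $W$ on a Cayley graph $\Gamma(W,\mathcal{A})$, which exists by Theorem~1.2 of \cite{OsinAcylindrical}. Hull's corollary, as we intend to use it, says the following. Suppose $K\leq G$ is a subgroup whose induced action on $\Gamma(G,\mathcal{A})$ is non-elementary, and $G$ has no nontrivial finite normal subgroups. Then $K$ contains a loxodromic WPD element $a$ with $E_G(a)=\langle a\rangle$. More precisely, such an element exists once the maximal finite normal subgroup $K(G)$ is trivial, and in general one gets $E_G(a)=\langle a\rangle\times K(G)$. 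So the proof reduces to two checks: that $\Br$ acts non-elementarily, and that $W$ has no nontrivial finite normal subgroup.

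\textbf{Non-elementarity of $\Br$.} We apply Lemma~\ref{lem:snormal_non_elem}, so we must show that $\Br$ is $s$-normal. By Lemma~\ref{lem:Binfinite}, $\Br$ is an infinite normal subgroup of $W$. Hence for every $g\in W$ we have $\Br\cap\Br^g=\Br$, which is infinite. Thus $\Br$ is $s$-normal, and by Lemma~\ref{lem:snormal_non_elem} it acts non-elementarily on the hyperbolic space. In particular $\Br$ contains loxodromic elements, and these are WPD for the ambient acylindrical action of $W$.

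\textbf{Absence of finite normal subgroups.} The proof of Lemma~\ref{lem:Binfinite} already recorded the needed fact via Proposition~4.3 of \cite{ParisCoxeter}. If $F\trianglelefteq W$ were finite and nontrivial, its normalizer would be all of $W$. That proposition says the normalizer of a nontrivial finite subgroup has infinite index, so this is impossible. Hence $K(W)=1$.

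\textbf{Conclusion and main obstacle.} With both checks done, the corollary yields $a\in\Br$ loxodromic and WPD with $E_W(a)=\langle a\rangle$. The step needing the most care is matching hypotheses exactly. Hull's statement may be phrased with $K$ a suborbital or non-elementary subgroup, and may produce $E_G(a)=\langle a\rangle\times K(G)$, or require $K$ to have trivial finite radical itself. If needed, the fallback is to work in the suborbital framework of \cite{hull2017small} and use $K(W)=1$ to kill the finite factor.

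If a direct citation is unavailable, we would instead use the standard small-cancellation trick. Take two independent loxodromic WPD elements $g_1,g_2\in\Br$, which exist by non-elementarity. A suitable product $g_1^{n}g_2^{n}$ has $E_W$ equal to its cyclic subgroup times a finite group normalized by a finite-index subgroup of $W$, and one checks that this finite part must be trivial.
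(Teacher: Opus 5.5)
Your skeleton matches the paper's: Hull's Corollary~5.7 \cite{hull2017small}, with non-elementarity of $\Br$ coming from $s$-normality and Lemma~\ref{lem:snormal_non_elem}. The gap is in your second check.

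Hull's corollary applies to a \emph{suitable} subgroup $H\leq W$. This means $H$ is non-elementary \emph{and} the maximal finite subgroup $K_W(H)$ of $W$ normalized by $H$ is trivial. That subgroup is not the finite radical of $W$, and triviality of the finite radical does not suffice for an arbitrary non-elementary subgroup. For instance, the hyperbolic group $G=(F_2\times\ZZ/2\ZZ)*\ZZ/3\ZZ$ has no nontrivial finite normal subgroup, because free factors are malnormal. Its subgroup $F_2$ is a quasiconvex free subgroup of rank two, hence non-elementary. Yet every $h\in F_2$ commutes with the $\ZZ/2\ZZ$ factor, so $E_G(h)\neq\langle h\rangle$. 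So the version of the corollary you quote is false. As written, your argument never uses the normality of $\Br$ at the point where it is needed.

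The condition also cannot be bypassed, so your small-cancellation fallback has the same problem. If $F\leq W$ is finite and normalized by $a$, then some power $a^m$ centralizes $F$, so $F\leq E_W(a)$. Hence $E_W(a)=\langle a\rangle$ with $a\in\Br$ forces $K_W(\Br)=\{1\}$.

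The missing step is short, and it is exactly what the paper does. Since $\Br\trianglelefteq W$, for every $g\in W$ the finite group $gK_W(\Br)g^{-1}$ is normalized by $g\Br g^{-1}=\Br$. By maximality it therefore lies in $K_W(\Br)$. Hence $K_W(\Br)$ is a finite normal subgroup of $W$, and the result of \cite{ParisCoxeter} you already invoke gives $K_W(\Br)=\{1\}$. With this, $\Br$ is suitable, and Hull's corollary yields the required loxodromic WPD element $a\in\Br$ with $E_W(a)=\langle a\rangle$.
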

\begin{proof}
    Corollary 5.7 of \cite{hull2017small} states that we can find the loxodromic elements with $E_W(a)=\langle a\rangle$ inside any suitable subgroup of $W$. Since we are working with a fixed acylindrical action of $W$, this loxodromic element will automatically be WPD with respect to that action. We thus only need to prove that $\Br$ is a suitable subgroup of $W$. To do so, we first show that the induced action of $\Br$ in non-elementary.

    A subgroup $H < W$ is called $s$-normal if $|H \cap H^g| = \infty$ for all $g \in W$. The subgroup $\Br$ being normal and infinite, it is $s$-normal. By Lemma~\ref{lem:snormal_non_elem}, the induced action of $\Br$ is non-elementary, so that $\Br$ is also an acylindrically hyperbolic group. 
    
    We will now prove that $\Br$ is suitable. 
    Let $K_W(\Br)$ be the maximal finite subgroup of $W$ normalized by $\Br$.
    A non-elementary subgroup $H\leq G$ is said to be \emph{suitable} if the maximal finite subgroup of $G$ normalized by $H$ is trivial, that is $K_W(\Br)=\{ 1\}$. As $\Br$ is normal, any conjugate of $K_W(\Br)$ is also a finite subgroup normalized by $\Br$. By maximality, all the conjugates must be inside of $K_W(\Br)$, so $K_W(\Br)$ is also normal. As stated in the proof of Lemma~\ref{lem:Binfinite}, $W$ does not have non-trivial finite normal subgroups. Thus $K_W(\Br)=\{ 1\}$ and $\Br$ is suitable.
    We are therefore in the conditions to use Corollary 5.7 of~\cite{hull2017small} and conclude the proof. 
    \end{proof}

We are now ready to prove the main theorem of this section

\begin{proof}[Proof of Theorem \ref{thm:prescribed-normalizer}]
First, we choose some $i \notin J$ and let $r := r_i$ be the associated involutive generator of $W$. Note that $r \notin C_J$. 

By Lemma \ref{lem:existanceLoxodromic}, we can find $a\in \Br$ such that $a$ is a loxodromic $WPD$ element with $E_W(a) = \{g \in W : \exists k,m \neq 0 , g^{-1} a^k g = a^m \} = \langle a \rangle $. Let us define $b := a^r \in \Br$. Since $b$ is a conjugate of $a$, it is also a loxodromic $WPD$ element of $\Br$ with $E_W(b) = \langle b \rangle$. The goal now is to apply Theorem~\ref{thm:DGO_thms} to $\{\langle a\rangle, \langle b\rangle\}$. For that, we need to check that $\{\langle a\rangle, \langle b\rangle\}$ hyperbolically embeds into $C_J$.

First, notice that $a$ and $b$ are non-commensurable in $C_J$. Indeed, for a contradiction, suppose there exists $k \in C_J$ such that 
$$
k^{-1}a^m k = b^n =ra^nr.
$$
Then, setting $g = kr$, we get $g^{-1} a^m g  = a^n$. This implies that $g \in E_W(a) = \langle a \rangle$ and thus $g \in C_J$. However, as $g = kr$ and $k \in C_J$, this would imply $r \in C_J$, a contradiction. 
By the first part of Theorem~\ref{thm:DGO_thms}, the pair $\{\langle a \rangle, \langle b \rangle \}$ hyperbolically embeds into $W$. As $C_J$ is a finite index subgroup of $W$, the pair also hyperbolically embeds in $C_J$.

We can therefore apply the second part of Theorem~\ref{thm:DGO_thms}. Take $N_a = \langle a^{2k} \rangle$ and $N_b = \langle b^{3k} \rangle$ for some sufficiently big integer $k$ that guarantees to avoid the forbidden finite subsets of the hypothesis of the theorem.
Set $H = \langle\langle N_a \sqcup N_b\rangle \rangle_{C_J}$ which is clearly a subgroup of $\Br$. The theorem says that the maps $\langle a \rangle/ \langle a^{2k} \rangle \hookrightarrow C_J / H$ and $\langle b \rangle/ \langle b^{3k} \rangle \hookrightarrow C_J / H$ are both injective. Therefore, in $C_J / H$, the image of $a$ has order exactly $2k$ but the image of $b$ has order $3k$. Therefore, $r$ cannot normalize $H = \langle\langle a^{2k},b^{3k}\rangle\rangle_{C_J}$. Indeed, if that were the case, conjugation by $r$ would go down to an automorphism of $C_J / H$, however it would send an element of order $2k$ to an element of order $3k$, which is clearly not possible.
\end{proof}

\section*{Acknowledgments}
The first author acknowledges financial support from the Actions de Recherche Concertées of the Fédération Wallonie-Bruxelles. The second author is a Postdoctoral Researcher of the Fonds de la Recherche Scientifique -- FNRS.

\section*{Disclosure of use of AI tools}

The proof strategy of Theorem \ref{thm:prescribed-normalizer} is the result of a query to GPT-5.6 Sol. The model's answers allowed the authors to discover and understand the needed concepts from the theory of acylindrically hyperbolic groups, and included a detailed list of citations and theorems. The authors then carefully checked each of the cited works, and reformulated the whole proof according to their own understanding. Large language models (LLMs) were also used sparingly to improve the clarity and readability of certain parts of the manuscript.

\bibliographystyle{ieeetr}
\bibliography{refs}

\end{document}